\documentclass[12pt]{article}
\usepackage{amssymb,amsmath,amsthm,graphicx,ucs,hyperref}
\usepackage[utf8x]{inputenc}
\usepackage[english]{babel}

\newtheorem{proposition}{Proposition}
\newtheorem{theorem}{Theorem}
\newtheorem{question}{Open Question}

\title{Telstar Balls Gone Wild}
\author{Thomas Fernique}
\date{thomas.fernique@ens-lyon.org}

\begin{document}
\maketitle

\begin{abstract}
We describe an artistic project involving the fabrication of each of the 1812 distinct footballs that can be obtained by rearranging the way the 32 panels of a classic Telstar ball are stitched together.
Each ball is one of a kind and individually numbered, and the entire series is sorted by decreasing roundness.
\end{abstract}

\section{The project}

What could be more ordinary than a football made of black pentagons, each surrounded by white hexagons?
And yet, have you ever wondered what would happen if, in a fit of gentle madness, someone decided to stitch its pieces together in a different order?
What if we freed the pentagons and hexagons from this strict geometry and abandoned them to joyful disorder?
This is exactly what this project does, exploring all combinations, from the roundest to the strangest, keeping the exact same panels (12 black regular pentagons and 20 white regular hexagons).
The three pieces depicted in Figure~\ref{fig:ballons_cirm} may speak louder than words.

\begin{figure}[hbt]
\centering
\includegraphics[width=\textwidth]{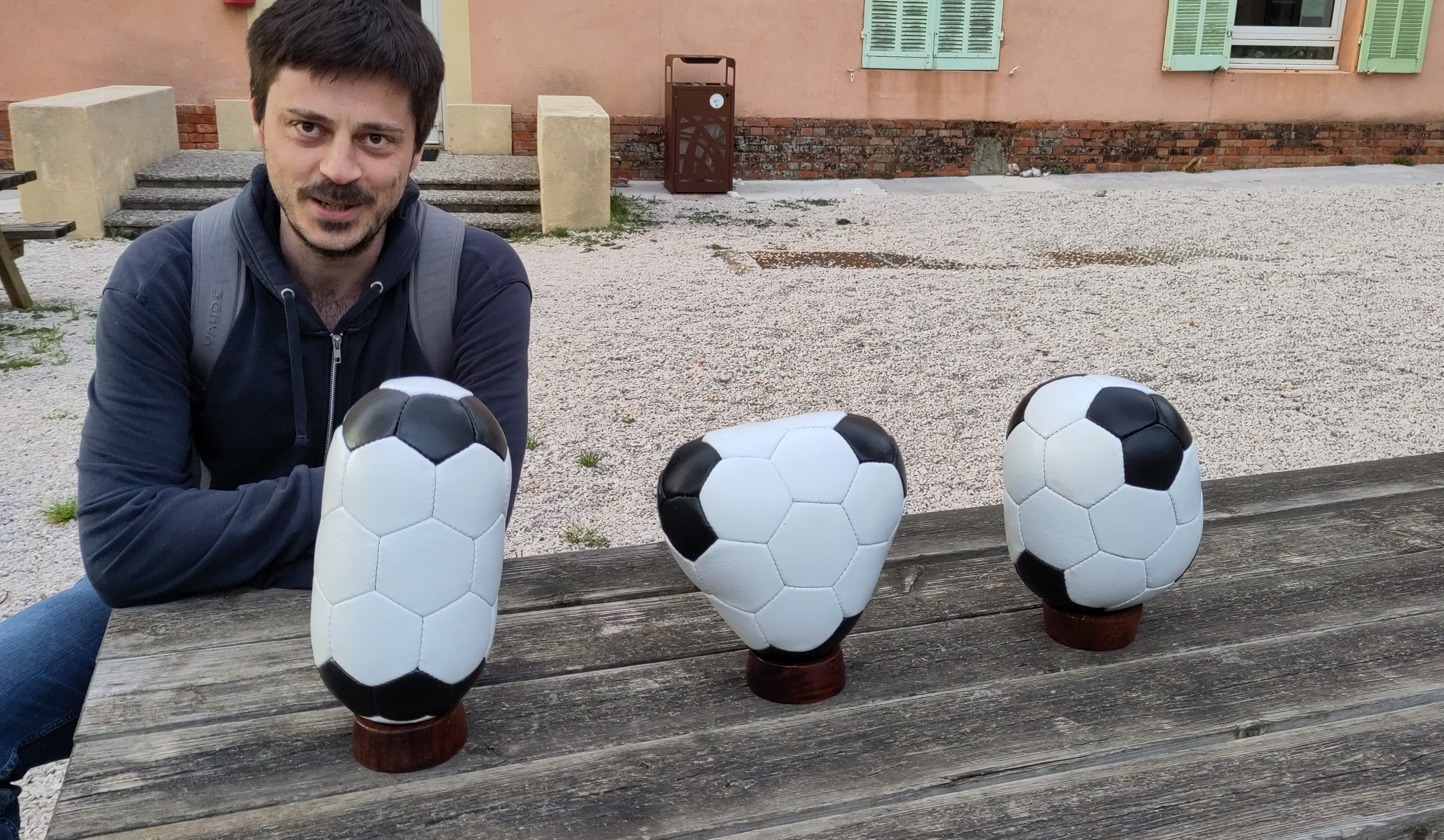}
\caption{Thomas Fernique and his three first balls (photo Sébastien Labbé).}
\label{fig:ballons_cirm}
\end{figure}

As we shall see, there are 1812 distinct balls up to isometry -- that is, up to rigid motion and mirror image.
The project raises both theoretical and practical questions: How are all the combinations to be enumerated? Are they genuinely feasible to construct from stitched leather panels? How might roundness be measured? How realistic is it to hand-stitch so many balls, each different and unusual? And what would an exhibition of such a multitude even look like?

Footballs have long inspired artists.
Most projects stick to the traditional round shape and focus on panel designs -- varying their shape or material.
Notable examples include the work of Jon-Paul Wheatley \cite{Wheatley}, balls exhibited at the OOF Gallery \cite{OOF}, and even the lineup of FIFA World Cup official match balls \cite{wikiballs}.
To the best of our knowledge, our project is original in its approach.
Perhaps the closest related work is Laurent Perbos's ``Ballon le plus long du monde'', which also uses the hexagons and pentagons of standard balls to craft cylindrical balls of varying lengths \cite{Perbos}.
Mention also the cubic ball of Fabrice Hybert \cite{Hyb98}, though the shape is not obtained from the natural curvature of the pieces (there are only hexagons).

Footballs also appear in science.
For example, to popularize Euler's formula for planar graphs -- see, in particular, Matt Parker's burlesque battle against impossible footballs \cite{bbc17, impossible}.
Or to illustrate mathematical notions such as the curvature of surfaces or their intrinsic geometry \cite{Ghy14,Ghy23}.
Or also in materials science through {\em fullerenes}, carbon molecules discovered in 1985 \cite{KARW85} whose bonds form hexagons or pentagons and can have various shapes \cite{SHSG17,SWA15,BBS25}.

Beyond the football inspiration, our approach belongs to a broader tradition of combinatorial or systematic art: generating all possible objects that satisfy a given set of rules.
This is a path explored by artists such as François Morellet \cite{Mor58}, Sol LeWitt \cite{LeW74} or Manfred Mohr \cite{Moh75}; it is still very much alive today \cite{Las15,Mai15,VJ26}.
The idea is to use mathematics as a way of revealing the unexpected richness hidden behind seemingly simple objects. This is a common thread in many works presented at the Bridges conference on ``Mathematics and the Arts'', where we presented this project in 2026 \cite{Fer26}.


\section{Combinatorics}
\label{sec:combinatorics}

Let us first focus on the combinatorics of the panels, that is, the way pentagons and hexagons can be assembled to form a topological sphere.
We here do not pay attention to their embedding in the 3-dimensional Euclidean space: we are looking at {\em abstract graphs}, which are just vertices connected by edges.
More precisely, we want planar graphs (that can be drawn without edge crossing on a plane or a sphere) that are 3-regular (because the panels always meet by 3) and with only hexagonal or pentagonal faces.
Such graphs exactly correspond to the structure of the fullerene molecules already mentioned and, by extension, we also call them fullerenes.

\subsection{Twelve pentagons}

It may be worth first recalling a standard application of the Euler relation:

\begin{proposition}
\label{prop:12p}
Every fullerene has exactly 12 pentagons.
\end{proposition}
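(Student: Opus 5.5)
The plan is a double count of vertices and edges in terms of the faces, followed by Euler's relation. Let $p$ and $h$ be the numbers of pentagonal and hexagonal faces, and $V$, $E$, $F$ the numbers of vertices, edges and faces of the fullerene. Since the graph is planar, connected, and drawn on the sphere, Euler's relation gives $V - E + F = 2$, with $F = p + h$.

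First count edge--face incidences. Each edge borders exactly two faces, and a pentagon has 5 edges while a hexagon has 6, so $2E = 5p + 6h$. Next count vertex--edge incidences. The graph is 3-regular and each edge has two endpoints, so $2E = 3V$.

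Substituting $V = \tfrac{2}{3}E$ into Euler's relation gives $F - \tfrac{1}{3}E = 2$, that is $6F - 2E = 12$. Replacing $F$ by $p+h$ and $2E$ by $5p+6h$ yields
\[
6p + 6h - 5p - 6h = 12,
\]
so $p = 12$. Note that $h$ cancels, so the argument places no constraint on the number of hexagons, which is consistent with fullerenes of many sizes.

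The algebra is routine. The only point needing care is the claim that each edge borders exactly two distinct faces. This holds because a fullerene, being 3-regular and built from faces of length 5 or 6, is a polyhedral (3-connected) graph. More simply, any graph cellularly embedded on the sphere has every edge appearing exactly twice across the face boundary walks, counted with multiplicity, so the face-length sum equals $2E$ in any case. I would state this briefly, and the proof is then complete.
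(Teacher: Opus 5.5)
Your proof is correct and is essentially the paper's argument: the same two double counts, $3V=2E$ and $5p+6h=2E$, substituted into Euler's relation to get $6F-5p-6h=12$ and hence $p=12$. Your added remark that each edge is counted exactly twice in the face-length sum is a detail the paper takes for granted.
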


\begin{proof}
Every planar graph with $V$ vertices, $E$ edges and $F$ faces satisfies the Euler relation $V-E+F=2$.
Here, $F=p+q$ where $p$ and $q$ denote the number of pentagons and hexagons.
Moreover, since the graph is 3-regular, every vertex sees 3 edges and every edges is seen by two vertices (its endpoints): this yields $3V=2E$ and the Euler relation becomes $3F-E=6$.
Similarly, every pentagon (resp. hexagon) sees 5 (resp 6) edges and every edges is seen by two faces, hence $5p+6q=2E$ and the Euler relation becomes $6F-5p-6q=12$.
With $F=p+q$ this yields $p=12$, as claimed.
\end{proof}

As it appears in the above proof, the Euler formula does not yield any restriction on the number of hexagons.
Actually, any number is possible, with the only exception of a single hexagon (this can be seen on the ball No.~1810 in \cite{appli}: it is made of a spiral of hexagons that can have arbitrary length $k\geq 2$; the case $k=0$ is the usual dodecahedron).
But in our case, we are interested in fullerenes with 20 hexagons, as the classic Telstar football.
Specifically, we want to know how many different such fullerenes exist.

\subsection{Algorithmic generation}

Fortunately enough, different algorithms have been developed to generate fullerenes.
The most efficient one today appears to be {\em buckygen} \cite{BGM12}.
The principle of the algorithm is to define elementary operations that insert hexagons in a given fullerene to get new (larger) fullerenes, so that starting with suitable small fullerenes (seeds) any other fullerene can be generated.
The main issue is that there are exponentially many different sequences of elementary operations that yields eventually the same fullerene, and one has somehow to define a sort of ``canonical way'' to generate a fullerene in order to not generate the same fullerene many times.
The fullerenes have been constructed and counted up to $400$ vertices \cite{BGM12}.
In particular, there are exactly $1812$ fullerenes with 20 hexagons.
For our project, we used the algorithm buckygen to generate all these graphs.

\subsection{Mathematical counting}

Let us also mention a recent impressive result: a closed formula for the generating series of fullerenes \cite{ESG25}, that is, a series $\sum a_qx^q$ where $a_q$ is the number of different fullerenes with $q$ hexagonal faces.
The formula is a linear combination of 45 Eisenstein series -- special functions from number theory that have deep ties to arithmetic.
Though complicated, the formula allows to go far beyond the algorithmic enumeration.
For example, it shows that there are $1\,203\,397\,779\,055\,806\,181\,762\,759$ fullerenes with $10\,000$ vertices.
However, this gives only the {\em number} of fullerenes, whereas for our project we need the fullerenes {\em themselves}.

\section{Geometry}
\label{sec:geometry}

We now have 1812 abstract graphs (previous section).
But we want footballs, which are surfaces made of solid leather.
Is it physically possible?
A few attempts with paper suggest that this is the case (Fig.~\ref{fig:paperfold}), but we would like to be sure that this is not due to a somewhat messy assembly or a stroke of luck on these particular balls.
Formally, the question can be stated as follows: is it possible to embed these abstract graphs in 3-dimensional Euclidean space so that the faces are regular pentagons and hexagons, possibly bent but not (too much) stretched?

\begin{figure}[hbt]
\centering
\includegraphics[width=\textwidth]{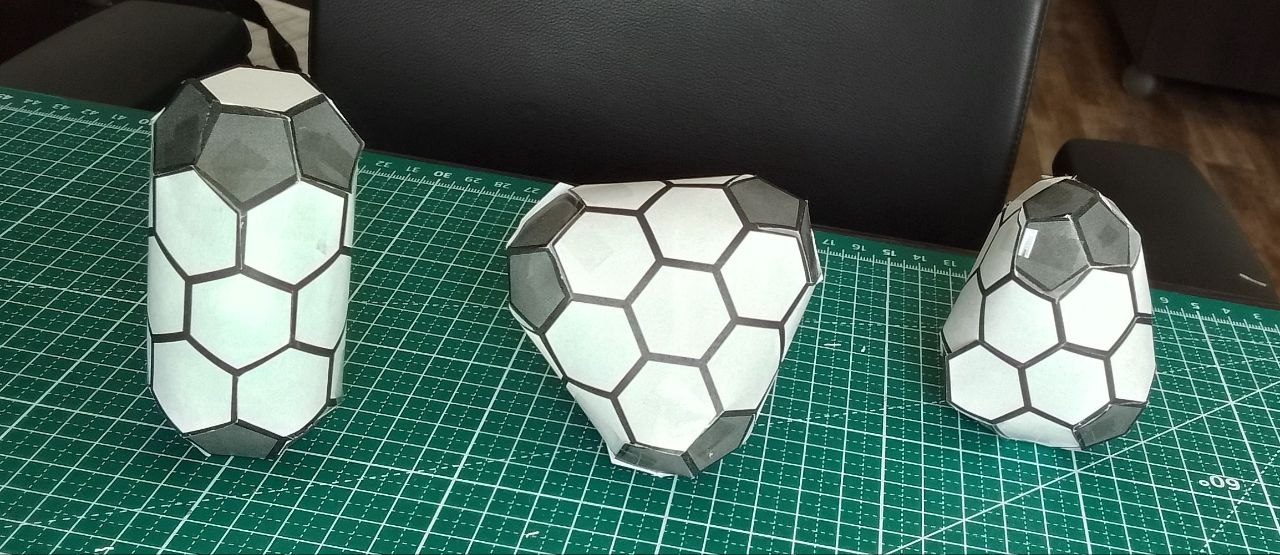}
\caption{
First trial with paper folding (before realizing the balls in Fig.~\ref{fig:ballons_cirm}).
}
\label{fig:paperfold}
\end{figure}

\subsection{Flat panels}

The issue is not so much related to the fact that regular pentagons and hexagons have unit-length edges.
Indeed, it is always possible to embed a fullerene so that all the edges have unit length (this follows from Theorem~1 in \cite{FKS20}).
The real issue lies in embedding the faces themselves.
Of course, it is tempting to embed faces as flat surfaces.
The Steinitz theorem \cite{SR34} indeed ensures that every fullerene (more generally, every planar 3-connected graph) can be obtained as the skeleton of a convex polyhedron (that is, the graph defined by the vertices and edges of the polyhedron).
However, there is no control on the shapes of the faces of this polyhedron: they may be not regular pentagons and hexagons.
Actually, it turns out to be possible only for the classic football:

\begin{proposition}
\label{prop:flat_faces}
The only fullerenes that can be embedded with flat regular faces are the dodecahedron and the truncated icosahedron.
\end{proposition}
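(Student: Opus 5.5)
The plan is to work locally at a single vertex and use the fact that the angles of regular polygons are rigid. Suppose a fullerene is embedded in $\mathbb{R}^3$ with flat regular faces. Consider the three faces around a vertex $v$. The three face angles at $v$ lie in $\{108^\circ,120^\circ\}$, since these are the interior angles of the regular pentagon and hexagon. Their sum must be strictly less than $360^\circ$ unless the three faces are coplanar. If all three were hexagons, the sum would be exactly $360^\circ$ and the faces would be coplanar around $v$.

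To turn this local flatness into a global contradiction, I would let it propagate. If the three faces at $v$ are coplanar hexagons, then every hexagon sharing one of their vertices with two other hexagons is also forced into that plane. Inside a connected cluster of hexagons in which every vertex is incident to three hexagons, the whole cluster is therefore planar. Then at a vertex where two such planar hexagons meet a pentagon, the angles are $120+120+108<360$. A planar hexagon configuration leaves a gap of $12^\circ$ there, and closing it requires a dihedral angle between the two hexagons, contradicting their coplanarity. So the first key step is the following claim: \emph{no vertex is incident to three hexagons}. Equivalently, every vertex lies on at least one pentagon.

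The second step is counting. By Proposition~\ref{prop:12p} there are 12 pentagons, which cover at most $60$ vertex-incidences. If every vertex lies on some pentagon, then $V\le 60$. From the proof of Proposition~\ref{prop:12p} we have $V=2E/3=(60+6q)/3=20+2q$, so $q\le 20$. The extremal case $q=20$ forces every vertex to lie on exactly one pentagon, so the pentagons are pairwise disjoint. This is the isolated-pentagon condition, and for $V=60$ it is well known (and checkable via buckygen) to yield only the truncated icosahedron. The case $q=0$ gives the dodecahedron.

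The main obstacle is the intermediate range $1\le q\le 19$, where some vertices carry two or three pentagons. There I would use rigidity of vertex figures. At a vertex with angles $(a,b,c)$, the three dihedral angles are determined by spherical trigonometry. So each vertex type $PPP$, $PPH$, $PHH$ fixes the dihedral angle along each edge as a function of the two faces on that edge together with the vertex type at each of its ends. An edge is shared by two vertices, and the dihedral angle it receives from both must agree. I would compute the dihedral angles: for a $PHH$ vertex, the hexagon–hexagon edge gets about $138.2^\circ$ and the pentagon–hexagon edges about $142.6^\circ$; for $PPH$ and $PPP$ the values differ (for $PPP$ it is $116.57^\circ$). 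Comparing these, a pentagon–pentagon edge must have $PPP$ at both ends or $PPH$ at both ends with matching values, and mixed types give contradictions. I expect this to force all vertices to be of a single type, $PPP$ or $PHH$, which yields the two claimed polyhedra. The delicate part is checking every adjacency pattern with numerically distinct dihedral values, ruling out coincidences such as a $PPH$ value equalling a $PPP$ value.
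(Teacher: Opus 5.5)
Your first two steps are correct. The coplanarity argument rules out vertices of type $(6,6,6)$; this is the degenerate case of the dihedral comparison, since a hexagon--hexagon edge gets dihedral angle $180^\circ$ from an all-hexagon endpoint and about $138.2^\circ$ from an endpoint carrying a pentagon. The count $V=20+2q\le 60$ then disposes of $q\ge 20$.

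But the range $1\le q\le 19$ is where the proof actually lives, and there you only state an expectation. The check you call delicate is in fact immediate and needs no table of numerical values. Take an endpoint of an edge lying between faces with angles $\alpha,\beta$, where the third face at that endpoint has angle $\gamma$. The spherical law of cosines gives $\cos D=(\cos\gamma-\cos\alpha\cos\beta)/(\sin\alpha\sin\beta)$, which is strictly monotone in $\gamma\in\{108^\circ,120^\circ\}$. Hence both endpoints of every edge have the same third face, and so the same type; by connectivity all vertices then share one type. This is exactly the paper's argument. It works for every $q$ at once, so once you have it, your case split by $q$ becomes largely superfluous.

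The genuine missing piece is the uniform type $(5,5,6)$, which your conclusion ``a single type, PPP or PHH'' silently drops. Dihedral matching cannot exclude it, since with a single vertex type every edge trivially has endpoints of equal type. Your counting cannot exclude it either: if every vertex lies on exactly two pentagons then $2V=60$, so $V=30$ and $q=5$. This is consistent with $V=20+2q$ and with every vertex touching a pentagon. You need a combinatorial argument, as in the paper. At each vertex of a pentagon, the other two faces would be one pentagon and one hexagon. So the five faces adjacent to that pentagon along its edges would have to alternate between pentagons and hexagons around a cycle of odd length, which is impossible. With this added, and with the all-$(5,6,6)$ case identified as the truncated icosahedron from known results (as the paper also does), your argument is complete.
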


\begin{proof}
Define the {\em type} of a vertex as the ordered tuple of the number of sides of the three faces that meet at this vertex, that is, $(5,5,5)$, $(5,5,6)$, $(5,6,6)$ or $(6,6,6)$.
The point is that the type of an endpoint of an edge defines the {\em dihedral angle} along this edge (the angle made by the two faces that share this edge).
The type of the other endpoint must hence be identical, in order to define the same angle.
By propagation, all vertices must have the same type (such a polyhedron is said to be {\em Archimedean}).

More precisely, the type $(5,5,5)$ yields the dodecahedron (12 pentagons and no hexagon), while the type $(5,6,6)$ yields the truncated icosahedron (the classic football).
The type $(6,6,6)$ is excluded because it would yield an infinite hexagonal grid.
The type $(5,5,6)$ is also excluded because every pentagon would be surrounded by alternating hexagons and pentagons, which is incompatible with an odd number of neighbors.
\end{proof}

\subsection{Bending the panels}

The panels are however not {\em rigid} but {\em solid}, that is, we can {\em bend} them.
Does that help?
As it happens, yes.
This follows from Alexandrov's theorem on polyhedra, stated in \cite{Ale05} as follows (existence is established in Chapter~4 and unicity in Chapter~3):

\begin{theorem}
\label{th:alexandrov}
A development homeomorphic to the sphere with the sum of angles at most $2\pi$ at each vertex defines a unique convex polyhedron by gluing.
\end{theorem}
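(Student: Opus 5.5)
Alexandrov's theorem is a deep classical result; here it is quoted from \cite{Ale05} and not reproved. What I would do is outline the standard proof strategy, in two parts. Uniqueness uses Cauchy's rigidity argument, adapted to developments. Existence uses a continuity (deformation) argument, as in Chapter~4 of \cite{Ale05}.

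\emph{Uniqueness.} Suppose two convex polyhedra $P$ and $P'$ realize the same development. I would take their edges, which are geodesics of the common intrinsic metric, and superimpose them. This gives a common geodesic subdivision in which every face is flat in both polyhedra, and it reduces the problem to comparing two convex polyhedra with isometric faces and the same combinatorics. Along each edge, label the dihedral angles by $+$, $-$ or $0$ according to how they compare in $P$ and $P'$. Cauchy's arm lemma shows that around any vertex with a nonzero label there are at least four sign changes. Euler's relation $V-E+F=2$, used much as in Proposition~\ref{prop:12p}, shows that the total number of sign changes is less than $4V$. Hence every label is $0$, and $P$ and $P'$ are congruent.

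\emph{Existence.} Fix the number $n$ of cone points, meaning the vertices with angle sum strictly less than $2\pi$. Consider two spaces. The first is the space $\mathcal{M}_n$ of developments with $n$ cone points of positive curvature; it is a manifold of dimension $3n-6$. The second is the space $\mathcal{P}_n$ of convex polyhedra with $n$ vertices, taken up to isometry; it has the same dimension. The map sending a polyhedron to its induced metric goes from $\mathcal{P}_n$ to $\mathcal{M}_n$. It is injective by the uniqueness part. It is continuous, and invariance of domain makes it open. Next I would show that it is closed, i.e.\ proper. Take a sequence of polyhedra whose metrics converge. Their diameters stay bounded, so the Blaschke selection theorem gives a convergent subsequence. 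The limit must have exactly $n$ vertices, because curvature cannot concentrate or vanish in the limit when the limiting metric has $n$ cone points. Once the map is open and closed, its image is a union of connected components of $\mathcal{M}_n$. So I would show that $\mathcal{M}_n$ is connected, and nonempty from the polyhedral side. Connectedness can be done by deforming one development into another, or by induction on $n$, merging cone points.

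\emph{Main obstacle.} The hardest step is the existence part, specifically connectedness of $\mathcal{M}_n$ together with properness. Degenerations need care: cone points may collide, and the limit may be a doubly covered polygon rather than a genuine polyhedron. I would handle this by allowing degenerate (flat, doubly covered) convex polyhedra, as Alexandrov does.

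Finally, for the application in this paper: gluing regular pentagons and hexagons along a fullerene graph gives an angle sum of $3\cdot 108^\circ=324^\circ$, $2\cdot 108^\circ+120^\circ=336^\circ$, $108^\circ+2\cdot 120^\circ=348^\circ$ or $3\cdot 120^\circ=360^\circ$ at each vertex. All of these are at most $2\pi$, so the hypothesis of Theorem~\ref{th:alexandrov} holds for every fullerene.
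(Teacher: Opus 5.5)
Your outline is correct and follows the route the paper indicates. The paper does not reprove the theorem: it cites Alexandrov, with uniqueness by a Cauchy-type rigidity argument (Chapter~3) and existence by showing that the realizable developments form an open and closed subset of the connected space of developments (Chapter~4), which is exactly your plan. One detail to keep in mind: after you superimpose the two edge sets, some dihedral angles equal $\pi$ and the crossing points become degenerate vertices, so Cauchy's lemma is needed in its non-strict form, as Alexandrov uses it.
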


Let us explain this statement and how it applies in our case.
A {\em development} is a collection of polygons with some prescribed rule for {\em gluing} them together along their sides, that is, instructions that specify which side of each polygon is glued to a side of another or the same polygon and in which direction.
It is {\em homeomorphic} to the sphere if one can continuously map the points of the polygons to the sphere, and conversely.
In our case, the polygons of the development are the 12 pentagons and the 20 hexagons.
The gluing is specified by the considered abstract graph (fullerene), and the planarity of this abstract graph ensures that the development is homeomorphic to the sphere.
Moreover, since at each vertex meet three polygons whose largest angle is $2\pi/3$ (for the hexagon), the sum of angles at each vertex is indeed at most $2\pi$. 
The Alexandrov theorem thus applies: every fullerene can be embedded as a (unique) convex polyhedron in 3-dimensional Euclidean space.
In other words: all our footballs do exist!

\begin{figure}[hbt]
\centering
\includegraphics[width=\textwidth]{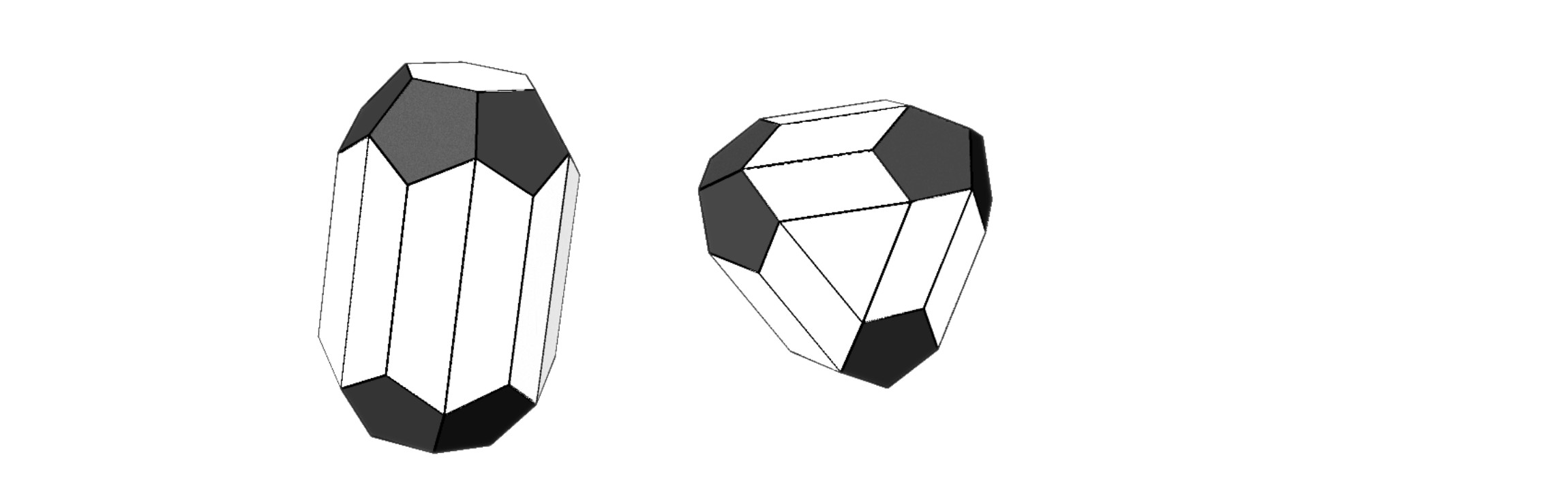}
\caption{
The Alexandrov polyhedra defined by the fullerenes corresponding to the footballs depicted in Fig.~\ref{fig:ballons_cirm}.
The first one has 26 faces, the second one 29 and the third one remains to be found!
The pentagonal faces are here all embedded as flat faces, but this is not always the case.
}
\label{fig:alexandrov}
\end{figure}

\subsection{Where are the creases?}

Let us stress that the edges of the Alexandrov polyhedron usually do not coincide with those of the pentagons and hexagons (except for the dodecahedron and the truncated icosahedron, Prop.~\ref{prop:flat_faces}).
Some examples are given in Figure~\ref{fig:alexandrov}.
Actually, it is usually not easy to find the Alexandrov polyhedron of a given fullerene.
The original proof of Theorem~\ref{th:alexandrov} is indeed not {\em constructive} (the set of developments that admit a polyhedral embedding is proven to be closed and open in the connected space of developments).
Another more recent proof provides a differential equation that progressively transforms an arbitrary initial polyhedron into the wanted Alexandrov polyhedron \cite{BI08}.
It has been shown to yield a pseudopolynomial algorithm \cite{KPD09}.
The theoretical bound on this algorithm -- $O(n^{457}/\varepsilon^{1182})$ for an $\varepsilon$-approximation of a polyhedron with $n$ vertices -- is however large (though it may work much better in practice) and we get the polyhedron only in the limit $\varepsilon\to 0$.
In particular, the following question remains open:

\begin{question}
Find the Alexandrov polyhedra defined by the fullerenes with $20$ hexagons.
\end{question}

\subsection{Avoiding creases?}

Though Alexandrov's theorem ensures the existence of our football, the embedding as a polyhedron, whose edges correspond to {\em creases} in the football, may be disappointing.
Could we not find a smoother embedding, e.g., {\em continuously differentiable}?
The {\em $C^1$-embedding theorem} by Nash and Kuiper ensures the existence of such an embedding (\cite{EM02}, Chapter~21), but it is not really a good fit for our context.
The surface, obtained at the limit by adding infinitely many corrugations with increasingly higher frequencies and smaller amplitudes, looks indeed quite pathological (the interested reader can look at the stunning pictures of such an embedding obtained in \cite{BJLT12} for the flat torus).
In particular, the embedding is not convex at all, and it was proven by Pogorelov that it cannot be so (\cite{Pog73}, Theorem~1 p.~167).
We can only hope that the slight elasticity of the panels on our balls will round off the creases and the corners.
After all, this is what happens with a classic football, which is closer to a sphere than to a truncated icosahedron with sharp edges.
We will see that this also appears to be the case for our balls.

\subsection{Chirality}

To end this section, let us say a word about {\em chirality}.
This term, widely used in molecular chemistry, derives from the Greek word for hand and alludes to the fact that although left and right hands look nearly identical, they are distinct: they are related by mirror symmetry but cannot be superimposed.
This is precisely the case for the majority of the 1812 fullerenes -- 1722 of them, to be exact: they admit two distinct embeddings that are mirror images of each other (Fig.~\ref{fig:mirrored}).
Thus, these 1812 fullerenes potentially yield $1812 + 1722 = 3534$ different footballs.
However, we have chosen -- at least for now -- to consider only one of the two possible mirror embeddings, and therefore to restrict ourselves to 1812 footballs.

\begin{figure}[hbt]
\centering
\reflectbox{\includegraphics[width=\textwidth]{ballons_cirm.jpg}}
\caption{
Mirror image of Fig~\ref{fig:ballons_cirm}.
The football in front of the author remains the same because it has mirror symmetry, but the other two cannot be superimposed on those in Fig.~\ref{fig:ballons_cirm}: they are chiral (the author as well).
}
\label{fig:mirrored}
\end{figure}

\section{Modeling \& numbering}
\label{sec:modeling}

Now that we know our footballs exist (previous section), let us model them.
This will allow us to write a JavaScript applet for visualizing every ball, with free rotation and zoom, which proves very useful for selecting the next ball to make and verifying it afterwards \cite{appli}.
This will also allow us to estimate the volume of each football before fabrication, which is crucial because we decided to number the balls in decreasing order of volume -- and this unique number is stamped on the valve panel of each ball before the panels are stitched.
We chose this method over the admittedly more accurate one of making all 1812 balls first, immersing each in water to measure the displaced volume, then unstitching them to stamp the corresponding volume number on each before finally sewing them back together\ldots

\subsection{Vertices embedding}

To model a football, we drew inspiration from so-called force-driven graph drawing algorithms \cite{Ead84}.
We added ``virtual'' springs between pairs of vertices (neighboring or not) of each panel, as well as a repulsive force from the centroid of the vertices toward each vertex.
Then, starting from an arbitrary initial embedding (Fig.~\ref{fig:modeling1}, left), we applied Newton's first law of motion to continuously move the vertices until an equilibrium position was reached.
The springs are intended to model the slightly elastic behavior of leather; their stiffness is a parameter to be chosen carefully (in particular those that cross the panel: they control how easily the panel can be bent).
The repulsive force is intended to model the force exerted by the bladder on the surface, which is significant (usually around 11 PSI -- Pounds per Square Inch -- roughly the force needed to push a heavy fire door); we chose a force in $r^6$, consistent with the theory of rigid membranes.
This process gave us an embedding for the {\em vertices} of each football, but not really for the faces (Fig.~\ref{fig:modeling1}, right).

\begin{figure}[hbt]
\centering
\includegraphics[width=\textwidth]{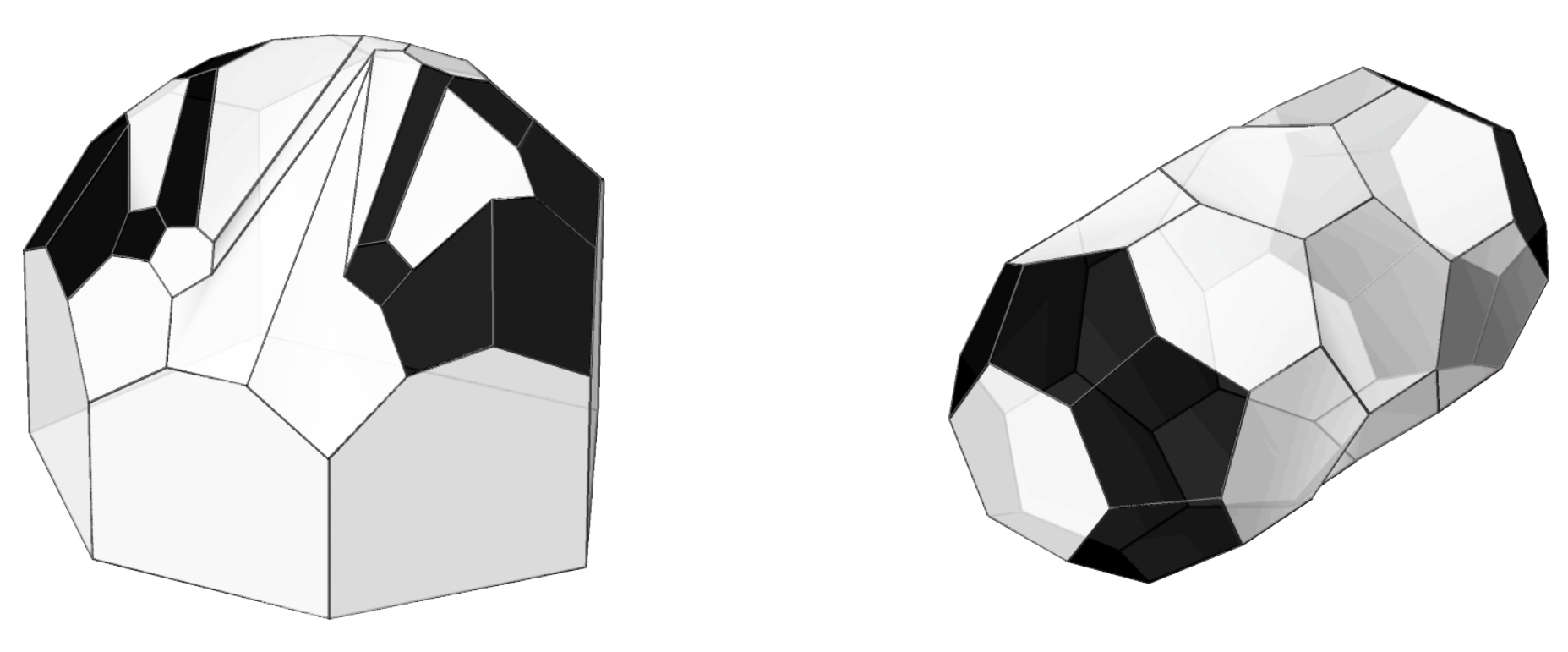}
\caption{
An arbitrary initial embedding of the vertices (left) and the embedding produced by our force-driven algorithm after 1000 iterations.}
\label{fig:modeling1}
\end{figure}

\subsection{Face embedding}

Now, to turn the skeleton defined by our vertices and edges into a surface, in other words to embody our footballs, we use a classic technique in computer graphics: {\em subdivision}.
Namely, we apply a few iterations of the classic Catmull--Clark algorithm \cite{CC78}; three iterations proved to be a satisfactory compromise (Fig.~\ref{fig:modeling2}).

\begin{figure}[hbt]
\centering
\includegraphics[width=\textwidth]{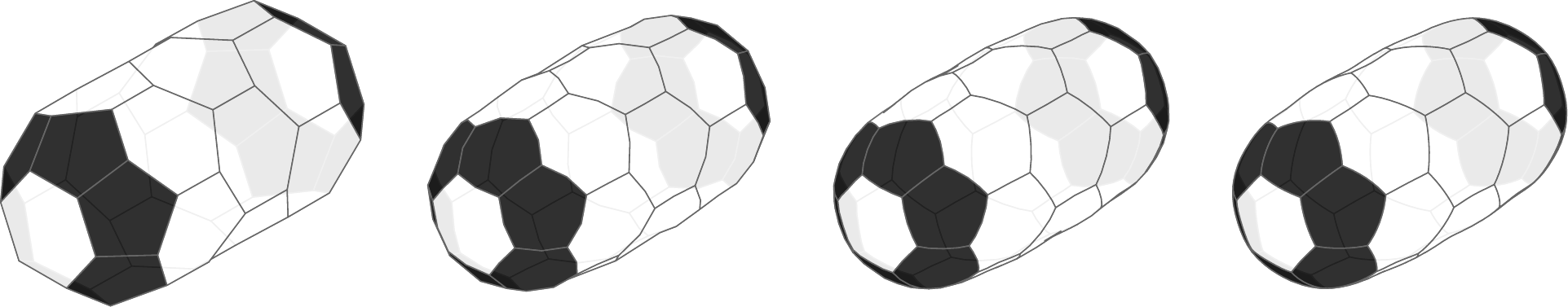}
\caption{
Successive applications of the Catmull–Clark algorithm (from left to right: 32, 180, 720 and 2880 faces).
}
\label{fig:modeling2}
\end{figure}


We do not claim to be performing a perfect simulation of the leather behavior, though the model seems to be quite faithful, and we were able to validate it on about 50 test balls -- see for example Fig.~\ref{fig:modeling3}.
The rapid generation, via Catmull–Clark, of the 2880 faces of a ball model from the initial 60-vertex embedding is also a significant advantage, particularly for the visualization applet \cite{appli}.

\begin{figure}[hbt]
\centering
\includegraphics[width=\textwidth]{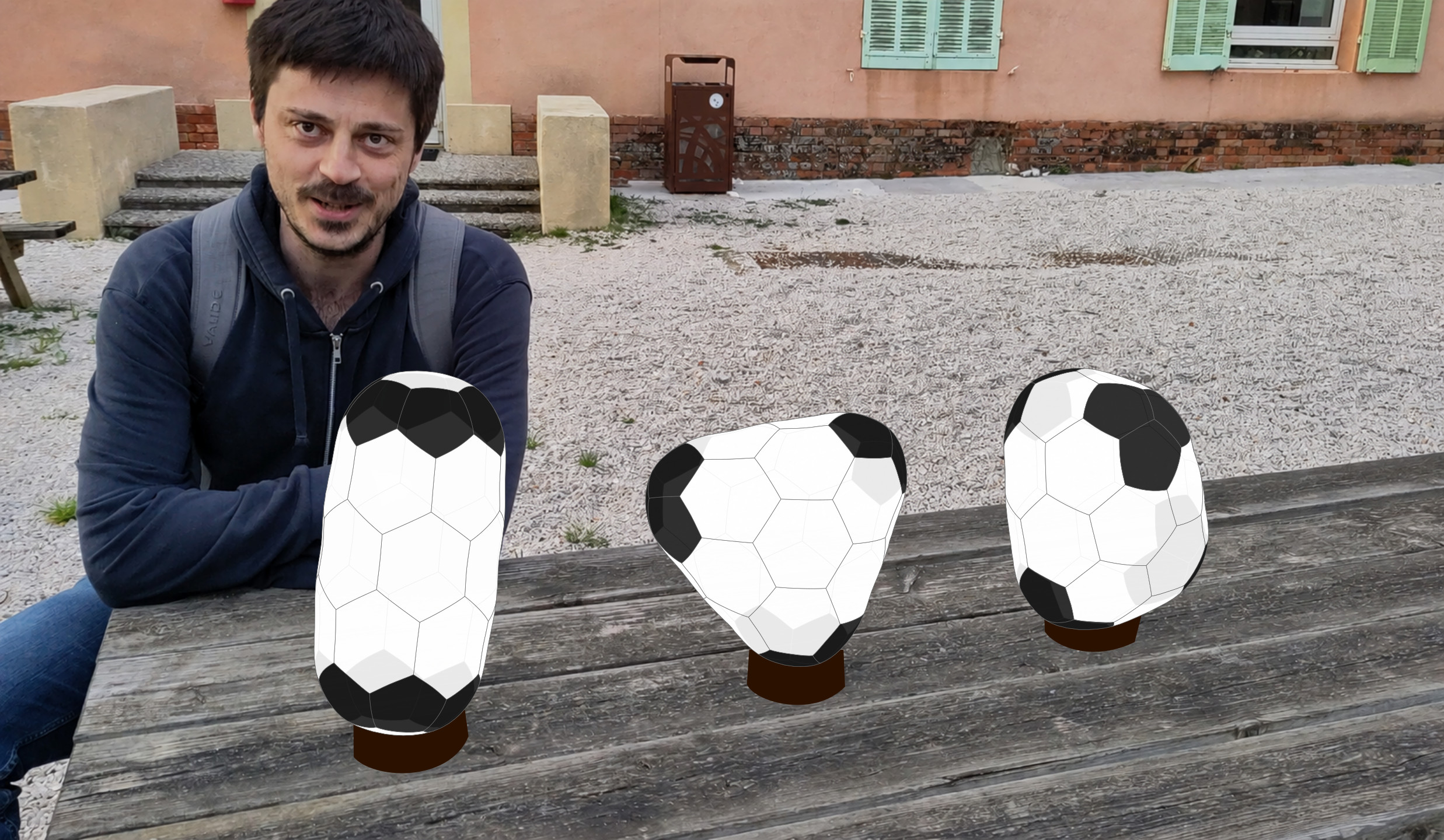}
\caption{Figure~\ref{fig:ballons_cirm} with the balls replaced by their models.}
\label{fig:modeling3}
\end{figure}

Let us also mention that our model has no reason to match the models elaborated by chemists for fullerenes, see, e.g., \cite{SHSG17}.
These aim to predict the position of atoms by molecular mechanics, which is completely different from modeling the shape of a surface resulting from leather mechanical constraints.
For example, even the Buckminsterfullerene, that is, the fullerene whose structure corresponds to the classic football, is not expected to have exactly the shape of a truncated icosahedron \cite{HMLLH91,FPS16}.

\subsection{Numbering}

As mentioned, we sorted the balls in decreasing order of their volume $V$, as computed from the model described above.
Actually, we refer to their {\em roundness}, defined as the {\em isoperimetric quotient} $V^2/S^3$ -- where $S$ is the surface area of the ball.
We normalize roundness at 100 for the roundest ball -- the classical one.
Since all the footballs have the same surface area $S$ and the function $V\mapsto V^2$ is increasing, sorting by roundness actually amounts to sorting by volumes.
For example, the balls depicted in Fig.~\ref{fig:ballons_cirm} have number 1796, 1710 and 1346 (from left to right) and roundness 66, 76 and 83.
Fig.~\ref{fig:roundness} depicts the roundness distribution, while Fig.~\ref{fig:toutes} provides an overview of all the 1812 balls.

\begin{figure}[hbt]
\centering
\includegraphics[width=\textwidth]{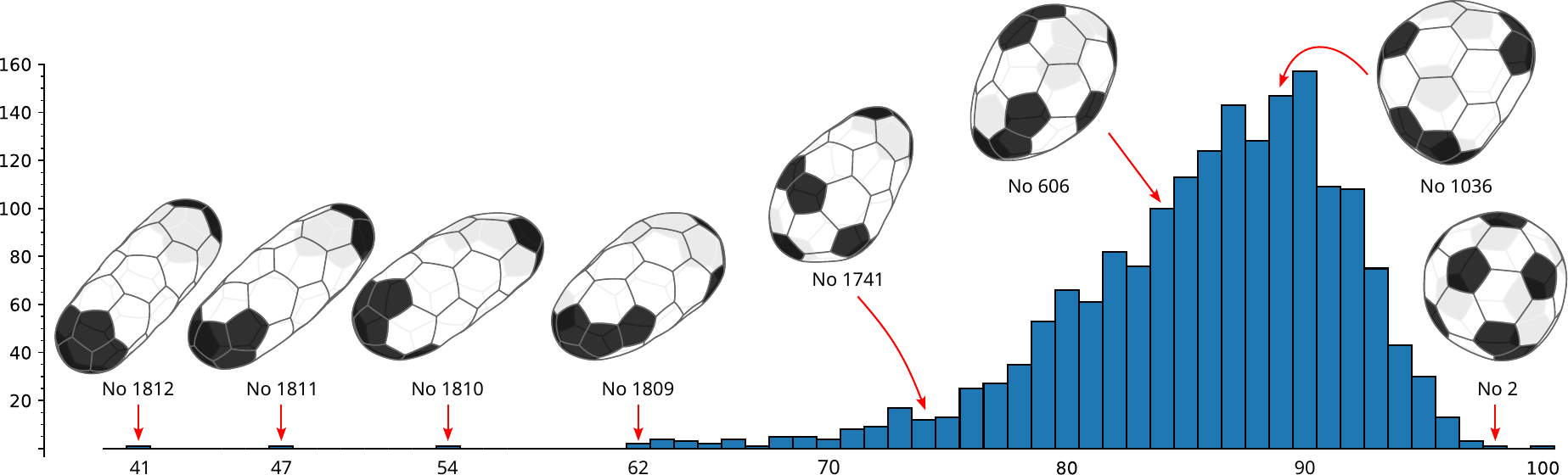}
\caption{
Number of balls depending on the roundness.
No.~2 is the second roundest, No.~606 is the median and No.~1036 the closest to the average roundness.
The distribution is not Gaussian (but why should it be?).
}
\label{fig:roundness}
\end{figure}

One could ask whether the volume estimation is robust.
For example, how does it depend on the number of iterations of the Catmull–Clark algorithm?
We computed the correlation between the volumes: it is larger than $0.99997$ between iterations 1 and 2, and larger than $0.9999994$ between iterations 2 and 3.
We also checked that the roundness of every ball changes by at most $2$ between iterations 1 and 2 ($82\%$ are unchanged), and by at most $1$ between iterations 2 and 3 ($94\%$ are unchanged).
The numbering is more sensitive, however, because many balls have very close volume or roundness.


\begin{figure}[hbtp]
\includegraphics[width=\textwidth]{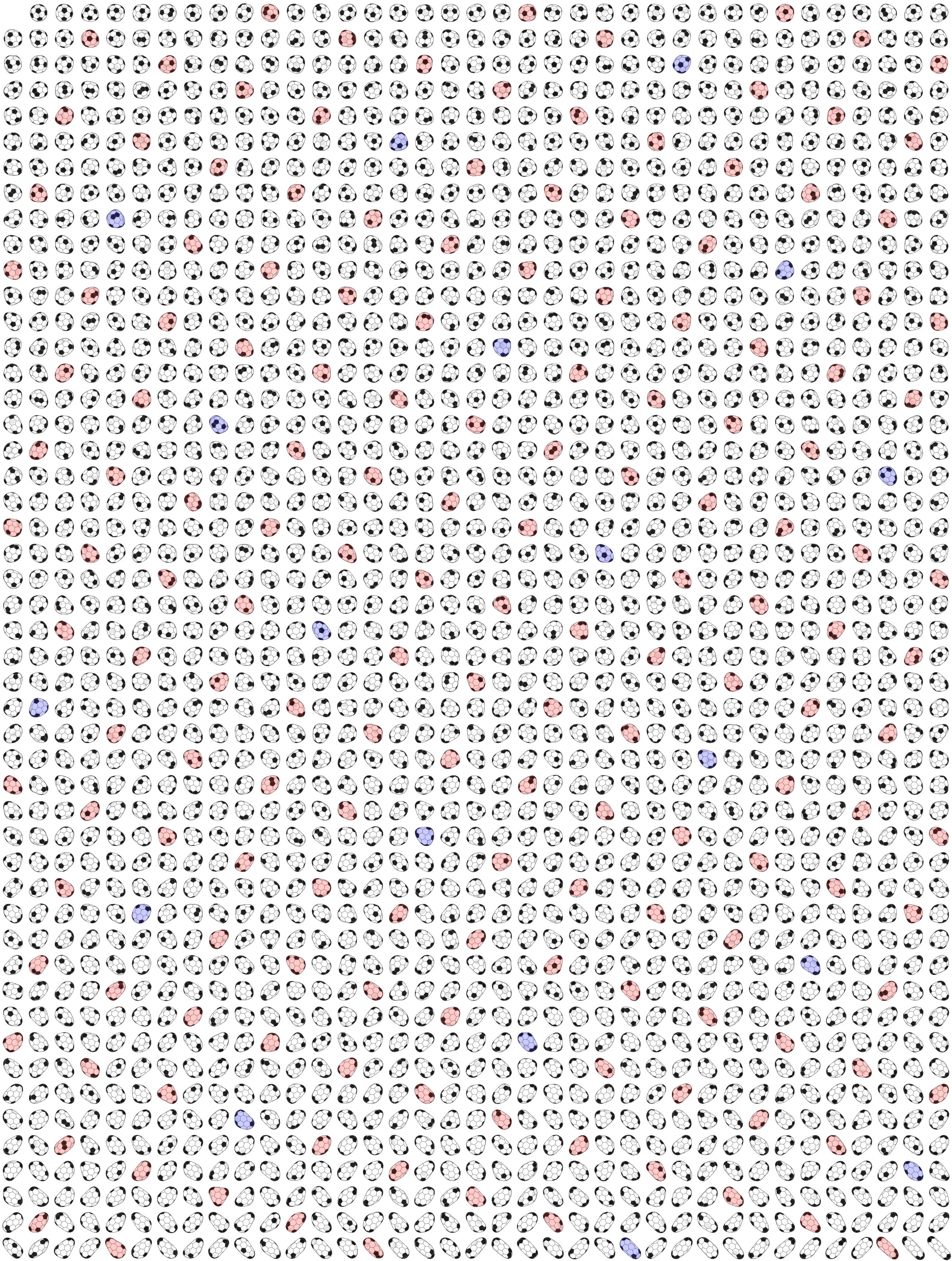}
\caption{
Balls No.~1 through 1812 (in reading order).
Multiples of 100 (resp. 10) are in blue (resp. red).
Can you spot the balls depicted in Fig.~\ref{fig:ballons_cirm}?
}
\label{fig:toutes}
\end{figure}

\section{Crafting}
\label{sec:crafting}
Now that we have models of all these 1812 footballs, let us turn to practice.
We describe the crafting process, the first realization (46 balls at the time of writing), as well as the strategy we envisage to bring this project to completion (that is, 1812 balls).

\subsection{Panels}

The panels are made of cowhide leather, as footballs were until the early 1970s.
Since then (Telstar Durlast, 1974), polyurethane has been preferred to leather, because leather absorbs water and does not allow playing in wet weather.
This criterion is obviously less important for our project than the inimitable look and feel of genuine leather.
Although vintage leather footballs are now often made with warm-colored leather (brown, beige), we kept the black and white of the original Telstar as an explicit reference to this iconic ball.
The leather is between 1.2 mm and 1.4 mm thick.
As in a real football, it is reinforced by a {\em lamination}: several layers of technical cotton fabric are glued on its back with latex.
This limits the deformation of the leather and gives the ball a standard weight (about 410 g — which we achieved with two layers of $245\textrm{ g/m}^2$ — the weight of the glue, difficult to estimate, should not be neglected).

\begin{figure}[hbt]
\centering
\includegraphics[width=\textwidth]{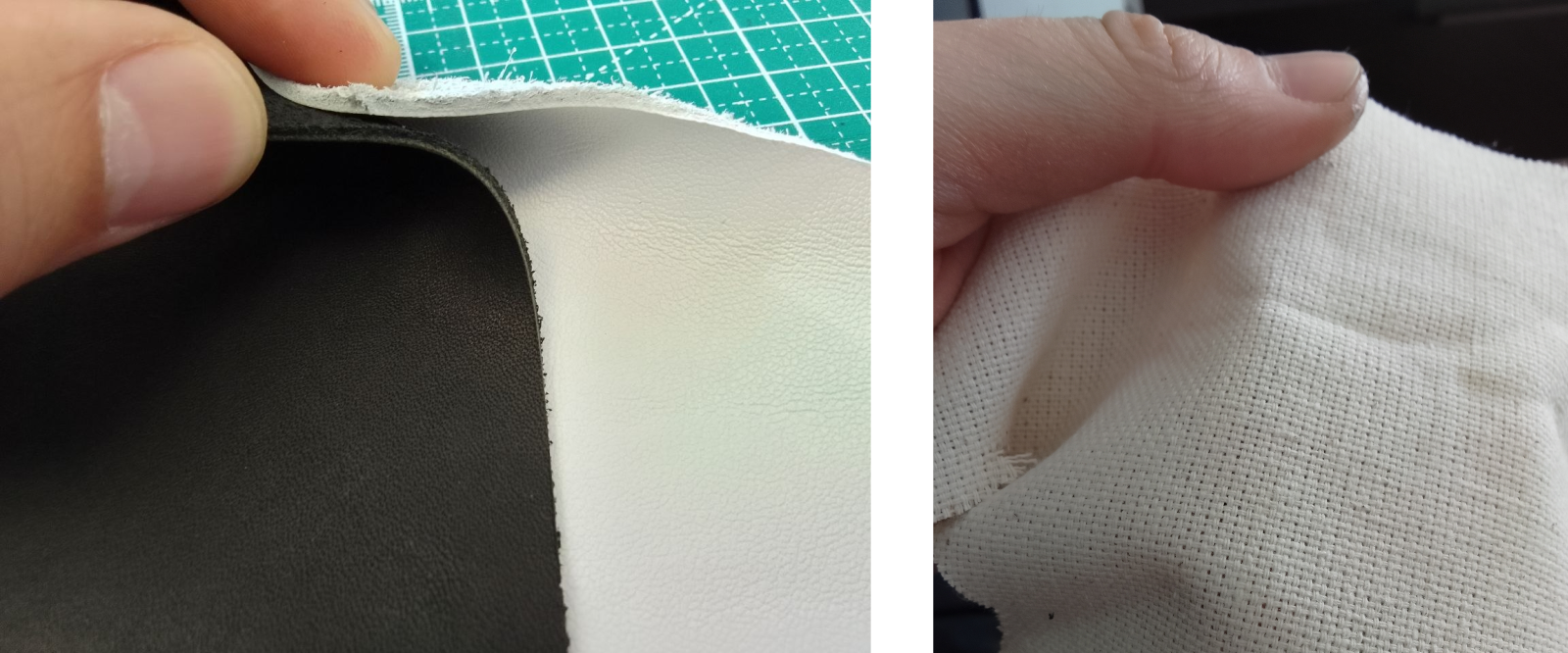}
\caption{
Black and white full-grain leather, cotton reinforcement fabric.
}
\label{fig:lamination}
\end{figure}

The panels are then cut with clicker dies (purchased from a professional in Punjab, 2 kg of steel each) using a 10-ton press (Fig.~\ref{fig:cut}).
Under the press, the upper part of the clicker die lowers and cuts the leather.
At the same time, needles that come out through holes in the movable lower part of the clicker die pierce 9 fine holes along each side for the stitching thread.
The pentagons and hexagons have a side length of 48 mm.
Since the stitched part ends up on the inside of the ball, the visible panels of the finished ball are about 45 mm, which yields the standard circumference of 70 cm for the ball.

\begin{figure}[hbt]
\centering
\includegraphics[width=\textwidth]{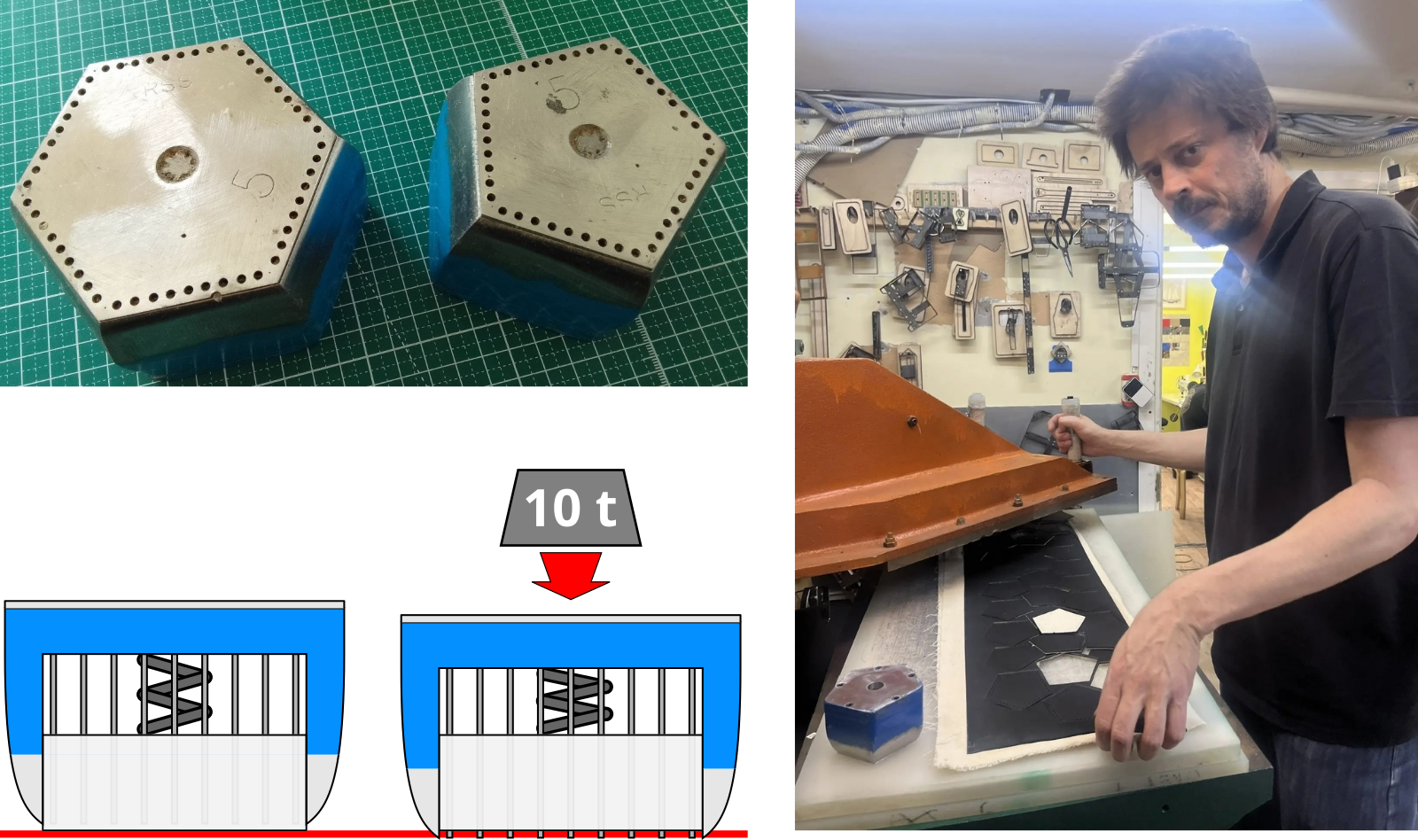}
\caption{
Clicker dies (upside down), scheme of a clicker die, 10-ton press.
}
\label{fig:cut}
\end{figure}

Finally, a hot press and movable brass type allow the panels to be numbered by stamping (Fig.~\ref{fig:stamp}).
The number is stamped on the panel which has the valve, taking care to align the number along the direction in which the leather stretches the least, so as to limit deformation once the ball is inflated.

\begin{figure}[hbt]
\centering
\includegraphics[width=\textwidth]{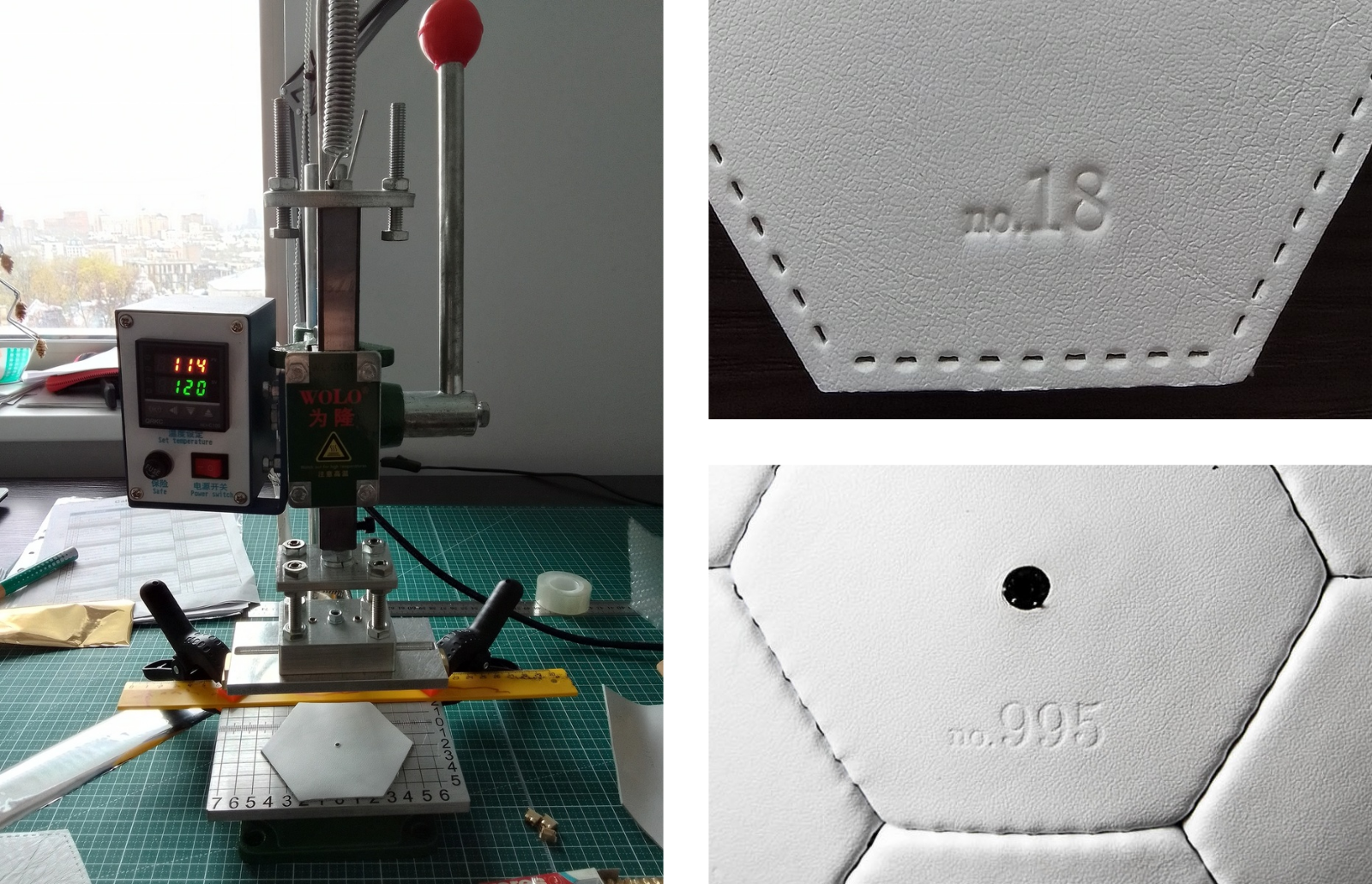}
\caption{
Hot press, stamped panels before and after stitching.
}
\label{fig:stamp}
\end{figure}

\subsection{Stitching}

Once the panels are made, they must be assembled.
We use the saddle stitch, a simple and classic leatherworking stitch known for its strength: two needles at the ends of a single thread pass through the holes in a zigzag, one opposite the other (both needles are inserted face-to-face into each hole at the same time so that one needle does not pass through the thread attached to the other).
After each pass through a hole, the thread is pulled very firmly from both sides so as to tighten the two panels as much as possible against each other, so that the thread does not show when the ball will be turned inside out and inflated.
Figure~\ref{fig:stitchA} illustrates the first stitch and Figure~\ref{fig:stitchB} shows how to ``turn'' to continue the next stitch and how to end it.

\begin{figure}[hbt]
\centering
\includegraphics[width=\textwidth]{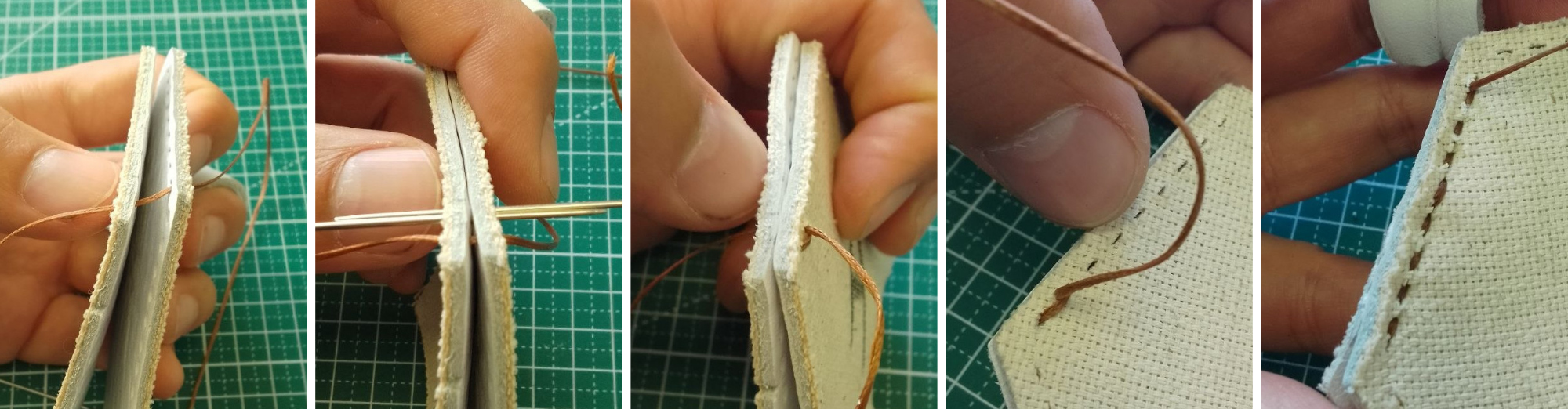}
\caption{
Stitching the first edge.
}
\label{fig:stitchA}
\end{figure}

\begin{figure}[hbt]
\centering
\includegraphics[width=\textwidth]{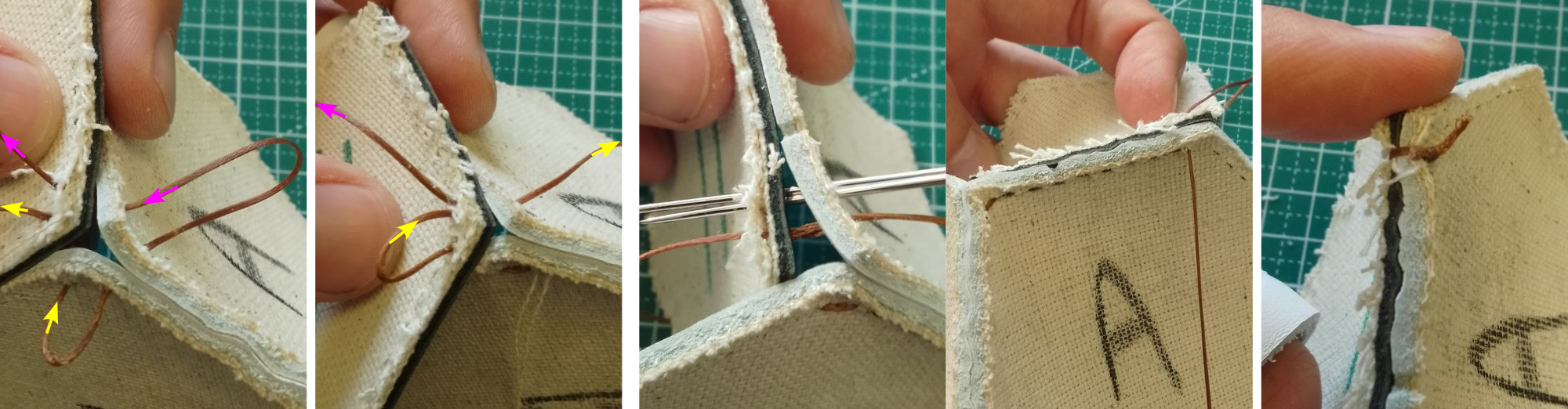}
\caption{
Stitching the next edge, ended with a double half-hitch knot.
}
\label{fig:stitchB}
\end{figure}

The ball is first assembled inside-out, because this allows the needles to go straight through the matching holes of two panels pressed tightly together with their outer faces against each other.
When only a few edges remain to be sewn (typically 4 or 5), the ball is turned right-side out (this requires some effort) — not forgetting to have previously glued the inner tube to the inner face of the panel that has the valve.

The last stitches take longer.
Indeed, you cannot enter the ball to finish it from inside.
The needles must be passed through the matching holes of the two panels independently, and the thread is not really pulled so as to let space between the panels for passing the needles.
Figure~\ref{fig:stitchC} illustrates that.

\begin{figure}[hbt]
\centering
\includegraphics[width=\textwidth]{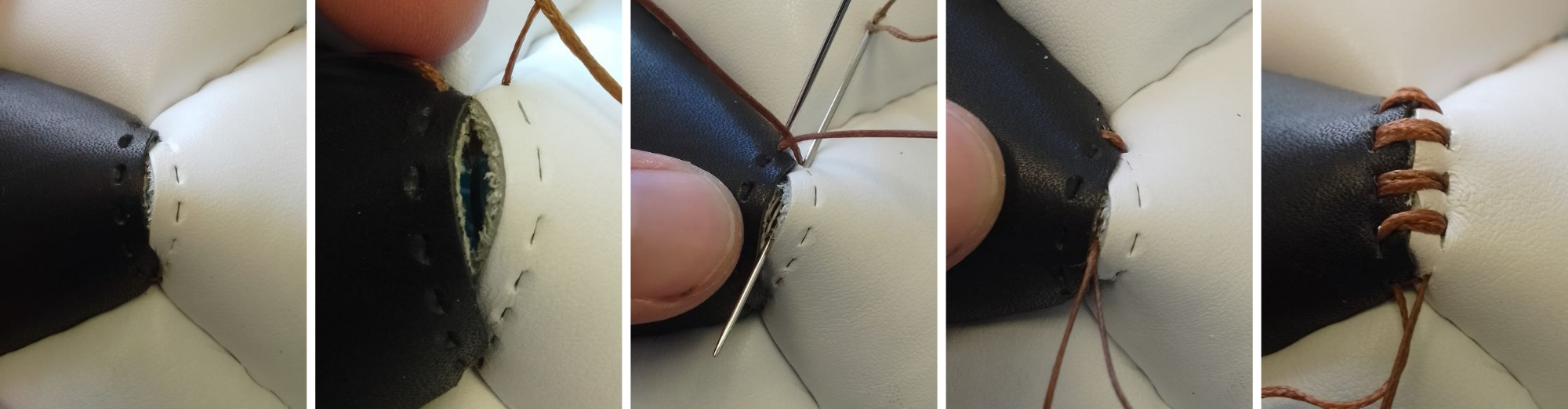}
\caption{
Stitching the last edge with a loose thread.
}
\label{fig:stitchC}
\end{figure}

Now the magic will happen.
The thread is pulled very firmly so as to tighten all the stitches along the edge.
This is difficult, one can use an awl to tighten stitch by stitch.
The panels come into contact and the seam disappears inside the ball!
It only remains to end the stitch and make something with the threads.
An elegant method is to bring the last vertex to a vertex on the opposite side of the ball, pass the needles through so that they come out at this opposite vertex (do not pierce the bladder!), pull very firmly, and tie a double half-hitch knot which, when the ball is inflated, will slip inside the ball, just as if it had been tied by a little elf inside.
Figure~\ref{fig:stitchD} illustrates these last steps.

\begin{figure}[hbt]
\centering
\includegraphics[width=\textwidth]{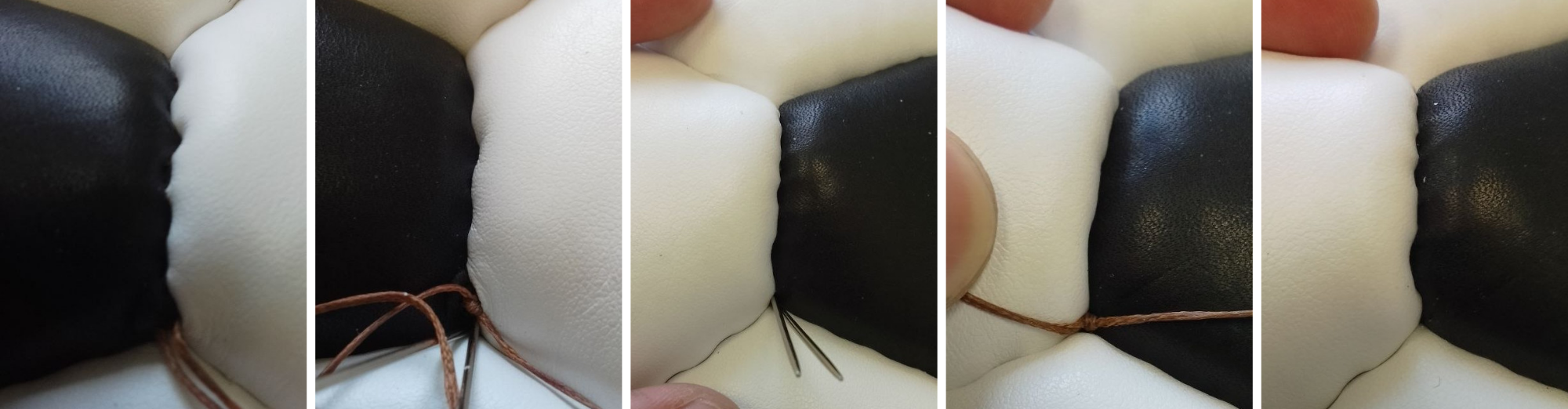}
\caption{
Bringing the last panels into contact and tying the final knot.
}
\label{fig:stitchD}
\end{figure}

All in all, this takes about 2 hours for a professional to stitch a ball (my personal record is 10 hours -- in addition to dexterity, it requires strong fingers to insert and especially to pull the needles).

\subsection{Nets}

We have so far omitted the question -- absolutely crucial in our project -- of the order in which the panels are sewn.
The problem is to describe each ball in a way that makes it as easy as possible to assemble.
A natural idea is to use a {\em net}, that is, a flat, two-dimensional pattern of connected polygons that can be folded to form the wanted ball.
A historical example for the truncated icosahedron (the classic football) is the net given in the year 1525 by Albrecht Dürer in \cite{Due25}, Fig.~\ref{fig:durer} (amusingly, Dürer counted 2 extra vertices).

\begin{figure}[hbtp]
\centering
\includegraphics[width=0.7\textwidth]{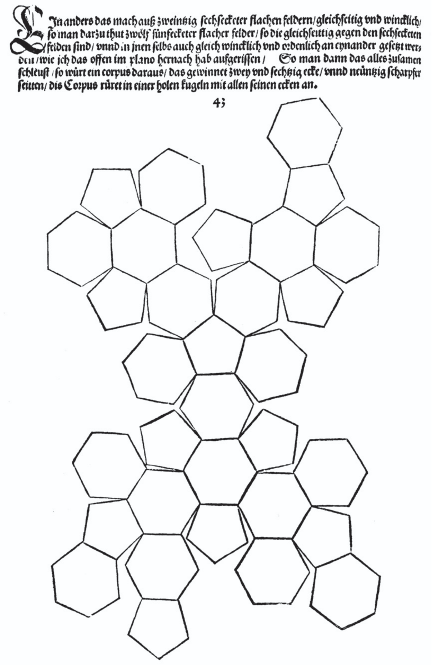}
\caption{
{\small
{\em
For the other [solid], make it from twenty planar hexagonal faces, equiangular and inclined, then add twelve planar pentagonal faces, which are equal with respect to the hexagonal faces, and join them together in the same inclined and orderly manner, as I have drawn it openly afterward in the plan.
When all this is closed together, a body results that has sixty-two vertices and ninety sharp edges.
This body touches the inside of a hollow sphere with all its vertices}
(caption translated from Early New High German by DeepSeek).} 
}
\label{fig:durer}
\end{figure}

Such a net where the pattern is folded along polygon edges is called an edge-unfolding.
Surprisingly, the question of whether a polyhedron admits an edge-unfolding, often referred to as the Dürer problem, is still open \cite{ORO11,DO07}.
Of course, one could always proceed as follows.
First, consider the graph whose vertices are the face centers and whose edges connect vertices that correspond to adjacent faces, and remove as many edges as possible so that the graph remains connected: this yields a so-called {\em spanning tree} of the {\em dual graph} of the polyhedron.
Then, edge-unfold along this spanning tree, that is, cut the polyhedron edges between faces corresponding to vertices not connected by an edge of the spanning tree.
However, different branches of the spanning tree may overlap after unfolding and a suitable spanning might not exist.
This question is open even if the polyhedron is assumed to be convex or if it is a polyomino, that is, unit cubes glued along entire faces.
To the best of our knowledge, the case of a fullerene is also still open:

\begin{question}
Does every fullerene admit an edge-unfolding?
\end{question}

We checked exhaustively by computer that this holds for fullerenes with 20 hexagons, that is, for our 1812 footballs.
However, the obtained nets are not always ideal, because two panels that have to be sewn together can be represented in the net by faces which are very far apart.
For example, can you spot all the five hexagons that have to be sewn to the bottommost pentagon in Fig.~\ref{fig:durer}?

Probably for that reason, ball factories generally use the two-part net depicted in Fig.~\ref{fig:classic_net}, at least when the panels are decorated so as to form a nice pattern over the whole ball (the case of black and white panels is indeed much easier: one checks that it suffices to never sew together two pentagons or three hexagons).
Each part of the net describes one hemisphere, with a special mark to indicate where the hemispheres have to be finally sewn together.
On each part, it is quite easy to guess along which edges any two faces must be sewn.

\begin{figure}[hbt]
\centering
\includegraphics[width=0.9\textwidth]{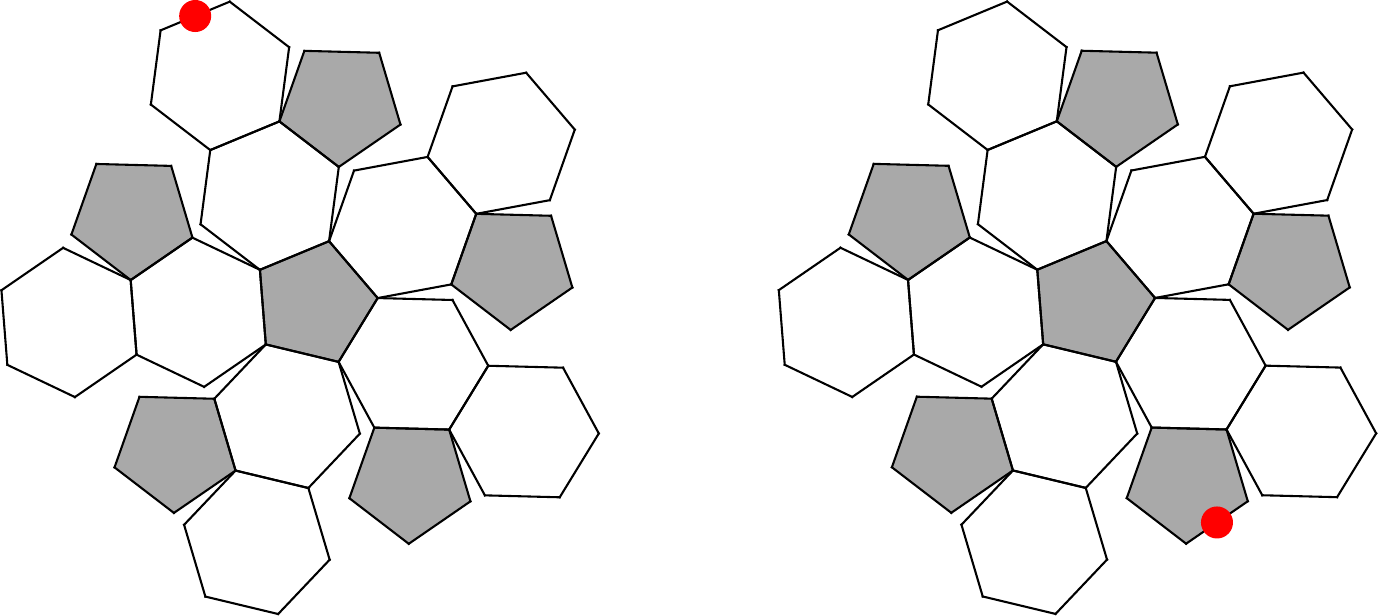}
\caption{
A two-part net for the classic football.
}
\label{fig:classic_net}
\end{figure}

We mimic this two-part net approach for our 1812 balls.
Namely:
\begin{enumerate}
\item the model obtained in Section~\ref{sec:modeling} is rotated so that the X axis is along the greatest dimension of the ball and Y along the second one;
\item the 16 panels with the largest Z form the northern hemisphere, the other 16 ones the southern hemisphere;
\item in each hemisphere, the panels are ordered so that every panel (except the first two ones) is adjacent to 2 or 3 previous panels (we always succeed in finding such an ordering -- coincidence or general fact?);
\item the panels are numbered from 1 to 16, respecting that order and choosing always among the possible panels the closest to the first panel.
\end{enumerate}
The third point aims to ease the work of the stitcher, who usually stitches two or three edges before knotting the thread.
The other points seek to yield a ``compact'' net in order to visualize easily along which edges any two faces must be sewn.
The red circle around a number indicates the placement for the valve: it has been (automatically) chosen in the flattest zone of the ball in order to allow the bladder to expand more freely.
Figure~\ref{fig:nets} shows examples of nets obtained in that way.
In practice, numbers are handwritten on the back of each panel to ease the stitching.

\begin{figure}[hbt]
\centering
\includegraphics[width=\textwidth]{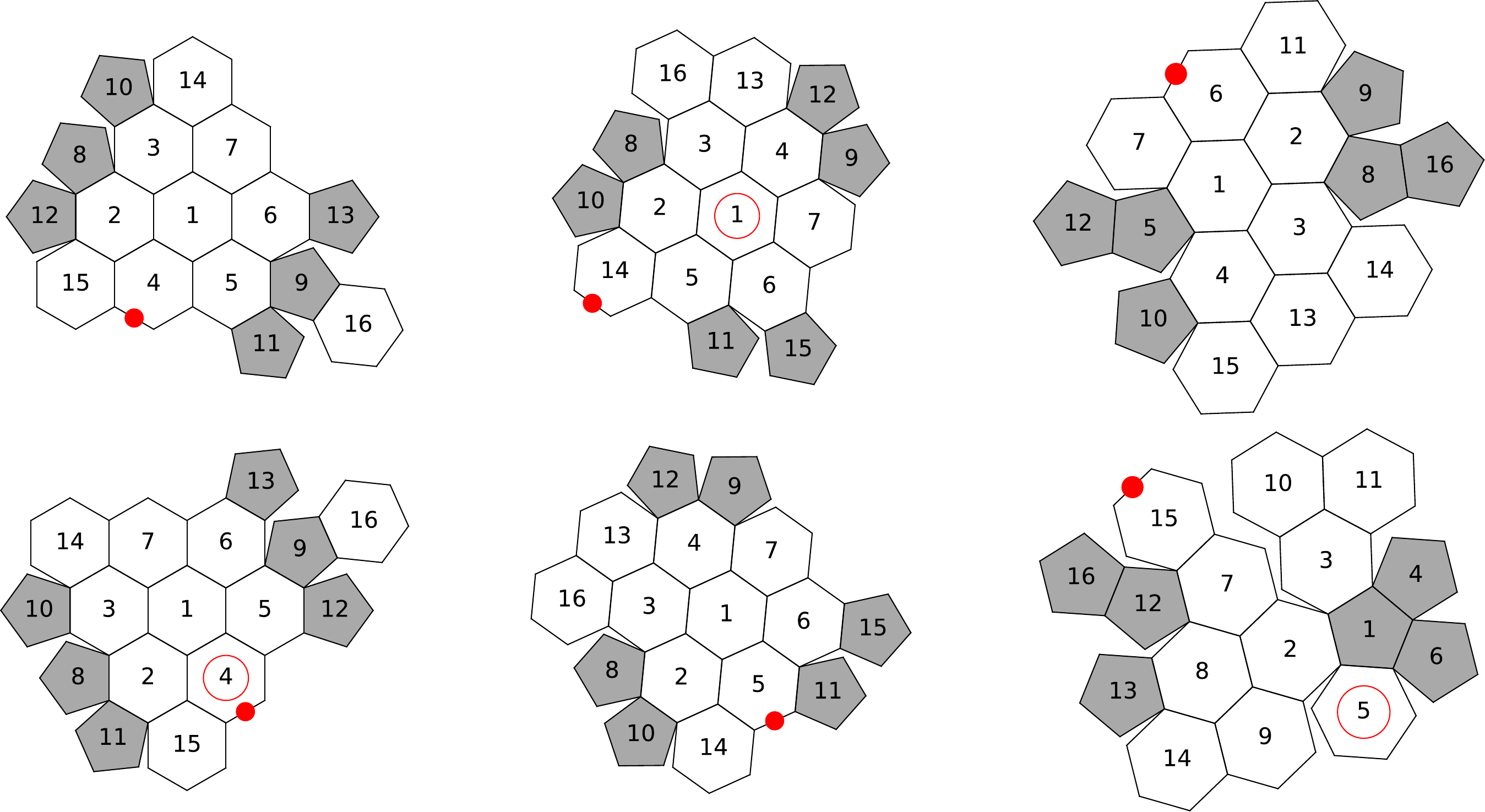}
\caption{
Two-part nets for the balls depicted in Fig.~\ref{fig:ballons_cirm} (from left to right, with the north hemisphere on top of the south one).
}
\label{fig:nets}
\end{figure}

\section{Genesis and perspectives}

This project has taken me far beyond the academic world of mathematics that is my own, and has led me to meet many different people who have helped me to varying degrees.

The project was born in October 2023, when I came across — I no longer remember how — the website of Vista Ballon, a Marseille-based company that offered hand-sewn balls with custom designs.
I submitted them my idea of sewing the pieces differently, without really believing in it, but I quickly received a positive reply from Jean-Baptiste de Tourris and Frédéric Caens. 
I thus turned to the generation of fullerenes, spotted three that I liked, and hand-drew three nets which I first tested (Fig.~\ref{fig:paperfold}) before sending them to Vista.
This materialized in March 2024 as the three balls shown in Fig.~\ref{fig:ballons_cirm}.

I showed these balls at the Centre International de Ren\-con\-tres Ma\-thé\-ma\-ti\-ques (CIRM) in Marseille, thanks to its communications officer, Stéphanie Vareille, which caught the attention of my first client: the Palais de la Découverte in Paris, in the persons of Noëlle Krajcman, Guillaume Reuiller and Robin Jamet.

There followed a period of doubt: should I continue beyond these three copies? Make all 1812? With what funding? Have them made: where? Make them myself: how? The encouragement (and orders) of Sre\v{c}ko Brlek, then Nicolas Bédaride and Damien Jamet, were precious at that time, as was the abundant advice of Vincent Delieutraz (which I more or less followed).
First, with the help of Gyunay Akhmedova, I got in touch with the sewing atelier of Aysel Özcan in Burdur, Türkiye. 
They made me 8 more test balls in early 2025.

However, I gradually became convinced that it would be more convenient to achieve autonomy if I were to continue this project, which seemed likely to be a long-term endeavor.
With the help of Olga Sizova and Nadezhda Kalinina (Mjöllborg, Mytishchi) and the advice of Dmitri Arkhipov (Carball Smolensk) and Igor Timoshchuk (HSE, Moscow), I eventually worked out how to manufacture my own panels.
My attempt to hand-stitch a ball -- about 12 hours for a very average result -- made me understand that I would need, at least initially, the help of professional stitchers.
Where to find them?

A quick search taught me that 70\% of hand-stitched balls are made in the city of Sialkot, Pakistan.
I therefore looked for a contact there, and found one in the persons of Farhan \& Suleiman Khawaja (BJ Sports), who were very responsive and enthusiastic (even if my strange and modest demand represented but a drop in the ocean compared to the roughly 3000 balls their fabric produces daily).
I produced a first set of 1472 panels, with the help of Evgeniy Poloskin, packed them in my bag and set off for a week in Pakistan, in May 2026.
There, I was able to observe the hand-stitching of the 46 balls made possible by these 1472 panels and learn the techniques, before returning with a big cardboard box of (deflated) balls in my arms (Fig.~\ref{fig:46balls}).

\begin{figure}[hbt]
\centering
\includegraphics[width=\textwidth]{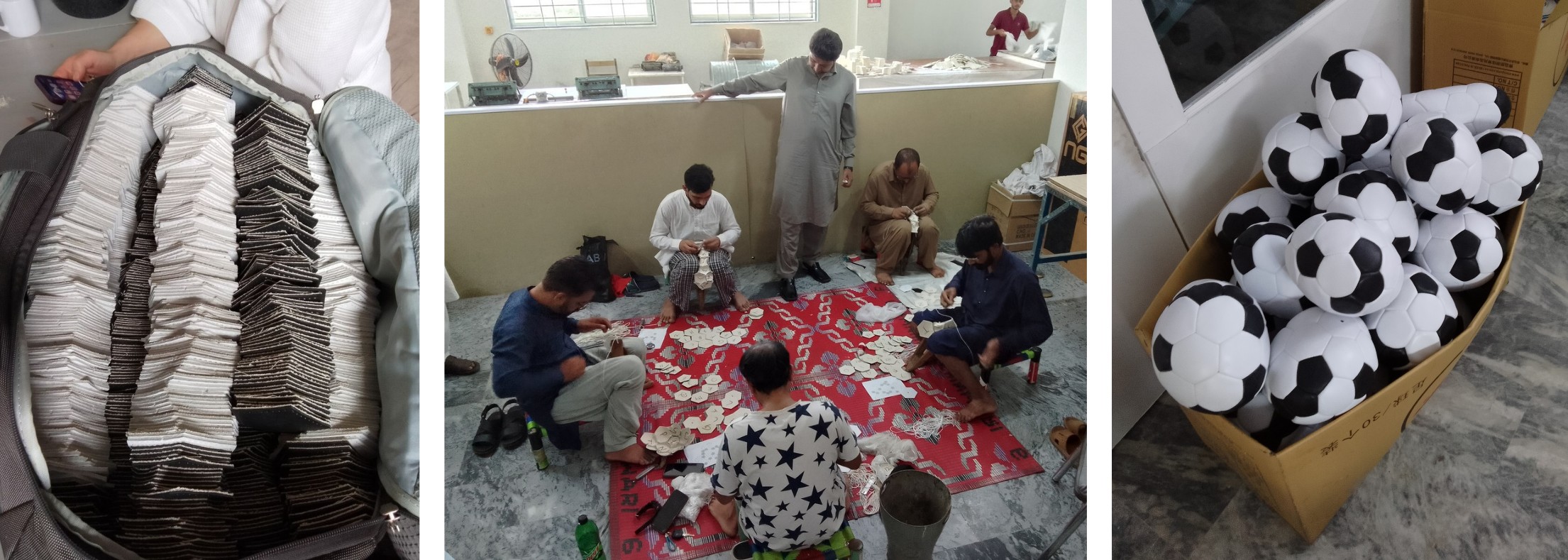}
\caption{
Transformation of 1472 panels into 46 different balls.
}
\label{fig:46balls}
\end{figure}

I then sell these balls here and there, while keeping some aside as demonstration copies.
I would like to thank here all my first buyers, who almost covered my costs and thus allowed me to think about what would come next.
In chronological order: Noëlle Krajcman, Guillaume Reuiller, Robin Jamet, Damien Jamet, Sre\v{c}ko Brlek, Nicolas Bédaride, Nazim Fates, Romain Couë\-til, Doria Fouchel, Andres Navas, Oleg Rodionov, Delphine Bournay \& Thierry Coulbois, Idrissa Kaboré, Quentin Landrivon, Raoul Santachiara, Matthew Parker, Faye Goldman and François Delannoy.

So, what next?
The diffusion strategy is still unclear to me, torn between the worlds of mathematics, football and art.
The production strategy as well: produce in small batches over a long time or make everything at once?
At least the manufacturing process is under control: based on the 46 balls produced, I estimate that it takes on average about 10 minutes to manufacture the panels needed to sew one ball, to which must be added a good 2 hours of stitching (an operation that can, however, be massively parallelized in Pakistan).
For now, I am considering a hybrid strategy: on the one hand, to continue producing at a slow pace (about a hundred per year) the series of numbered leather balls; on the other hand, to look for a sponsor for a ``pool of ball''.
The idea is to have an entire second series made of copies in synthetic leather, without a stamped number, for a traveling installation where these balls would be placed in a garden pool that the public could enter.
Indeed, based on the average density of a random packing of spheres -- $64\%$ according to the experiments carried out in \cite{BJ60} -- the volume occupied by 1812 standard balls is less than $16\textrm{m}^3$.
Our 1812 balls are not round, but their volume is smaller than a standard ball, and the sphere is moreover conjectured to be the convex solid that fills space the least efficiently (Ulam's conjecture \cite{Kal14}).
So they should quite easily fit in $16\textrm{m}^3$.
This is about the volume of a common round tubular garden pool 4.50 m in diameter and one meter deep.
And also the volume of an ordinary delivery van.
With a total weight of about 750 kg for the 1812 balls and a demountable pool, a transportable installation is therefore conceivable.
One could consider starting with balls 3D-printed in PLA at a 1:5 scale, in order to have a model to show.
Rough tests suggest that about 15 balls could be printed at once in about one night, and therefore the 1812 balls in about 4 months.
All this for a volume of about 130 L -- an inflatable garden paddling pool -- and an estimated weight between 25 and 30 kg -- relatively easy to transport.


\appendix
\section{Characterization}
\label{sec:characterization}

Since these 1812 footballs are all unique, one could theoretically describe each by a property of its own.
As we shall see, this turns out to be more or less easy depending on the ball: some are more unique than others!
In this appendix, we examine a few remarkable properties.

\subsection{Roundness}

Of course, the first property that comes to mind is roundness, since we numbered the balls according to this criterion.
But how to measure roundness only by visual inspection from a ball in hand?
Let us give a combinatorial approach that uses the notion of curvature.
The {\em curvature} at a point of a surface is defined as the sum of the angles of an infinitesimal triangle around this point minus $180^\circ$.
This definition is due to Alexandrov and generalizes the Gaussian curvature \cite{Ale05}.
Here, every point on a panel has zero curvature everywhere because the sum of the angles of a triangle drawn on it is equal to $180^\circ$.
Bending the panel does not change anything: it must be stretched in order to change its {\em intrinsic} metric.
It also holds on the edge between two panels because a crease is only a particular kind of bending.
But the Gauss–Bonnet theorem states that every topological sphere -- our footballs are such objects -- always has a total curvature of $720^\circ$.
This leaves no choice: all the curvature must be concentrated at the vertices.
When three hexagons meet at a vertex, the sum of the angles of any triangle around this vertex is $180^\circ$ and the curvature is thus still zero at such a vertex.
But when a pentagon joins, since its angles are only $108^\circ$ (instead of $120^\circ$ for the hexagon), it yields an {\em angular deficit} of $12^\circ$, which is reflected in the sum of the angles of any triangle around the vertex, hence in the curvature.
This yields four different curvatures depending on the number of pentagons that meet at this vertex (Fig.~\ref{fig:angle_deficits}).
Note that since every pentagon has 5 vertices, each with an angular defect of $12^\circ$, and since there are always 12 pentagons (Prop.~\ref{prop:12p}), we get a total curvature $12\times 5\times 12^\circ=720^\circ$, as predicted by the Gauss–Bonnet theorem.

\begin{figure}[hbt]
\centering
\includegraphics[width=0.9\textwidth]{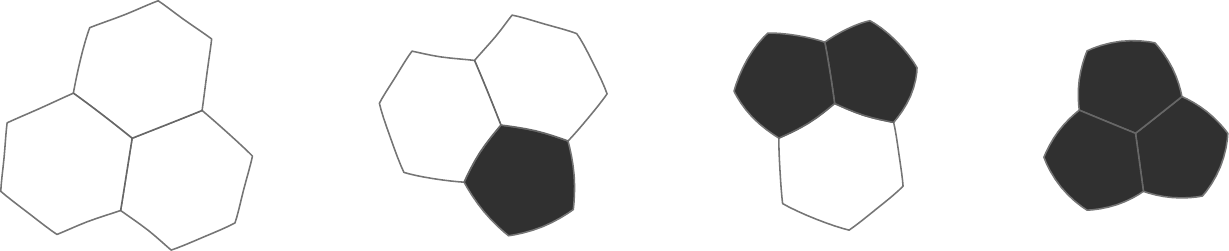}
\caption{
Vertices with curvature (i.e., angle deficit) $0^\circ$, $12^\circ$, $24^\circ$ and $36^\circ$.
}
\label{fig:angle_deficits}
\end{figure}

So, how can curvature be used to approximate the roundness of a ball?
The key idea is that the rounder a ball is, the more uniformly the curvature is distributed over its surface (and conversely).
The roundest ball is the classic one: all 60 vertices have curvature $12^\circ$.
The least round ball (leftmost in Fig.~\ref{fig:roundness}) has 10 vertices of very high curvature ($36^\circ$), 10 vertices of high curvature ($24^\circ$), 10 vertices of ``normal'' curvature ($12^\circ$) and 30 flat vertices.
To measure uniformity, we do not sum the curvatures (the total is always $720^\circ$) but their {\em square}.
The convexity of $x\to x^2$ indeed ensures (Jensen inequality) that the smaller is that sum, the more uniform are the curvatures.
For simplicity, we can divide each curvature by $12$ before taking its square, so that it amounts summing values $0$, $1$, $4$ or $9$, respectively, for the vertices depicted in Fig.~\ref{fig:angle_deficits}.
Actually, since we want the maximum to be reached for the roundest ball, we take the inverse of that sum as our roundness estimator.
A computation shows that the correlation over the 1812 balls between that estimator and the roundness as given by the modeling (Section~\ref{sec:modeling}) reaches $90\%$.
This can even be improved by replacing the curvature of every vertex by an average of the curvatures of the vertices in a neighborhood: a computation shows that, by averaging over vertices within $2$ edges, the previous correlation increases up to $99\%$.
This excellent correlation between values obtained by two completely different approaches (combinatorial {\em vs} modeling) is quite convincing as to the reliability of the roundness estimates used to number the balls.

However, even if it is theoretically possible to compute this value exactly from a ball in hand, it is quite tedious in practice.
Moreover, we get only 18 different values (584 by averaging over vertices within $2$ edges), so it does not help that much to distinguish balls (in \cite{appli} there are 41 different values for the roundness computed from the modeling).
We must thus resign ourselves to abandoning roundness as a simple discriminating property.

\subsection{Spiral}

Another way to describe a ball is the {\em spiral encoding} defined in \cite{MMD91} (see also \cite{FM95}) for fullerenes.
The idea is simple: choose an arbitrary face, and spiral around it as if peeling an orange, as in Fig.~\ref{fig:spiral}.
This yields a sequence of pentagons and hexagons that completely describe the fullerene.

\begin{figure}[hbtp]
\centering
\includegraphics[width=\textwidth]{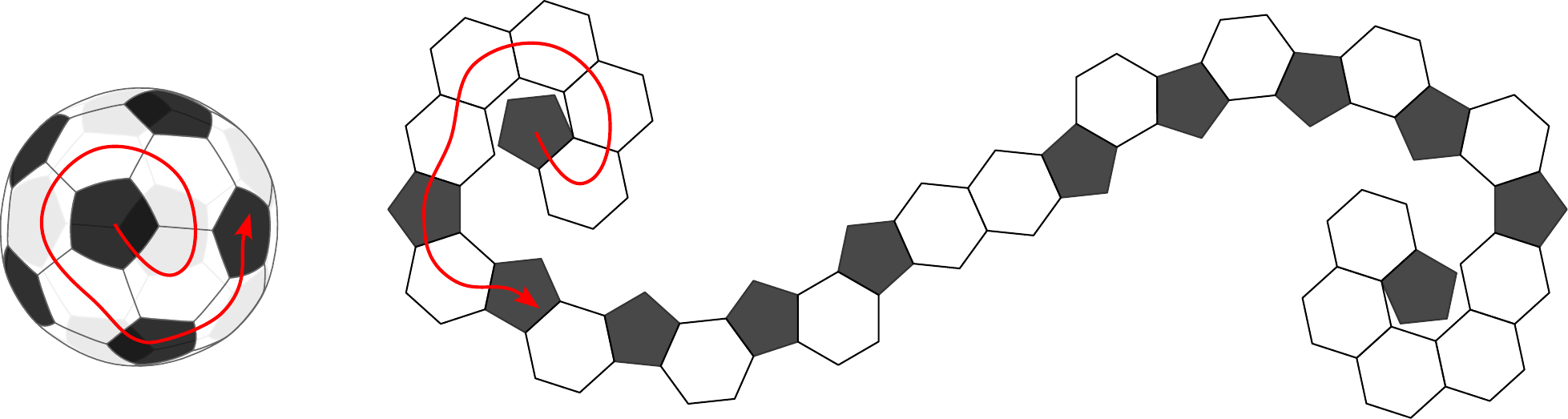}
\caption{
Peeling a ball (here the classic one) as an orange.
}
\label{fig:spiral}
\end{figure}

Unfortunately, it turns out in practice to be difficult to follow a spiral correctly on a ball held in hand (unless you draw the path on the ball with a marker).
Moreover, depending on the starting face and direction of rotation, different codings of the same ball can be obtained.
The solution proposed in \cite{MMD91} is to compute {\em all} codings and then keep the lexicographically minimal one -- a method that seems rather impractical on a ball held in hand.
Also note that this spiral coding does not always exist for large fullerenes -- although this does not affect our 32-faces footballs because the smallest counterexample has 192 faces \cite{FM93}.
Again, we must resign ourselves to abandoning spiral encoding as a simple discriminating property.

\subsection{Signature}

What immediately catches the eye when holding a ball is the dark patterns formed by the clusters of pentagons.
A classic ball, for example, has all its pentagons isolated.
The balls in Fig.~\ref{fig:ballons_cirm} have, respectively, two rings, three identical blocks, and four identical blocks.
There are 33 different patterns (up to isometry) that can be spotted on the 1812 balls, depicted in Figures~\ref{fig:paths} and \ref{fig:blocks}.
Among them, 16 are chiral, that is, different from their mirror image -- both may appear in the same ball (e.g., No.~1694).

\begin{figure}[hbtp]
\centering
\includegraphics[width=\textwidth]{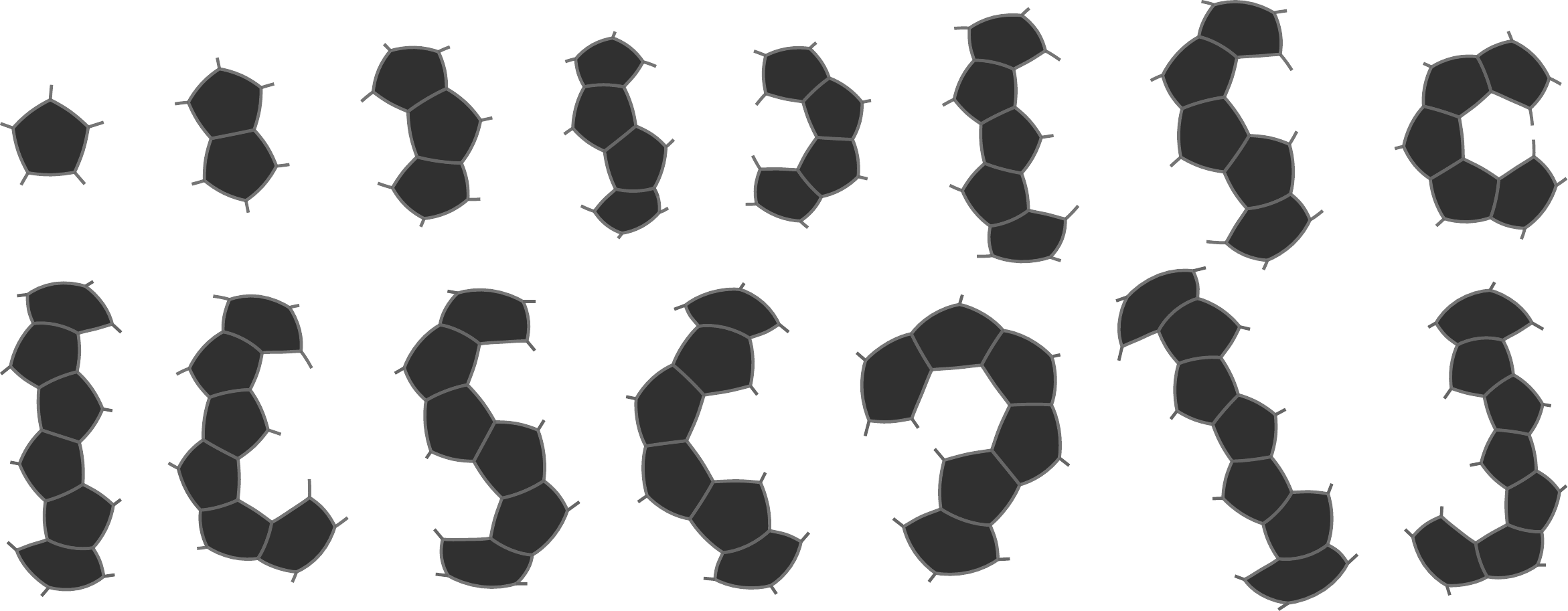}
\caption{
All the different paths that can appear on our footballs.
}
\label{fig:paths}
\end{figure}

\begin{figure}[hbtp]
\centering
\includegraphics[width=\textwidth]{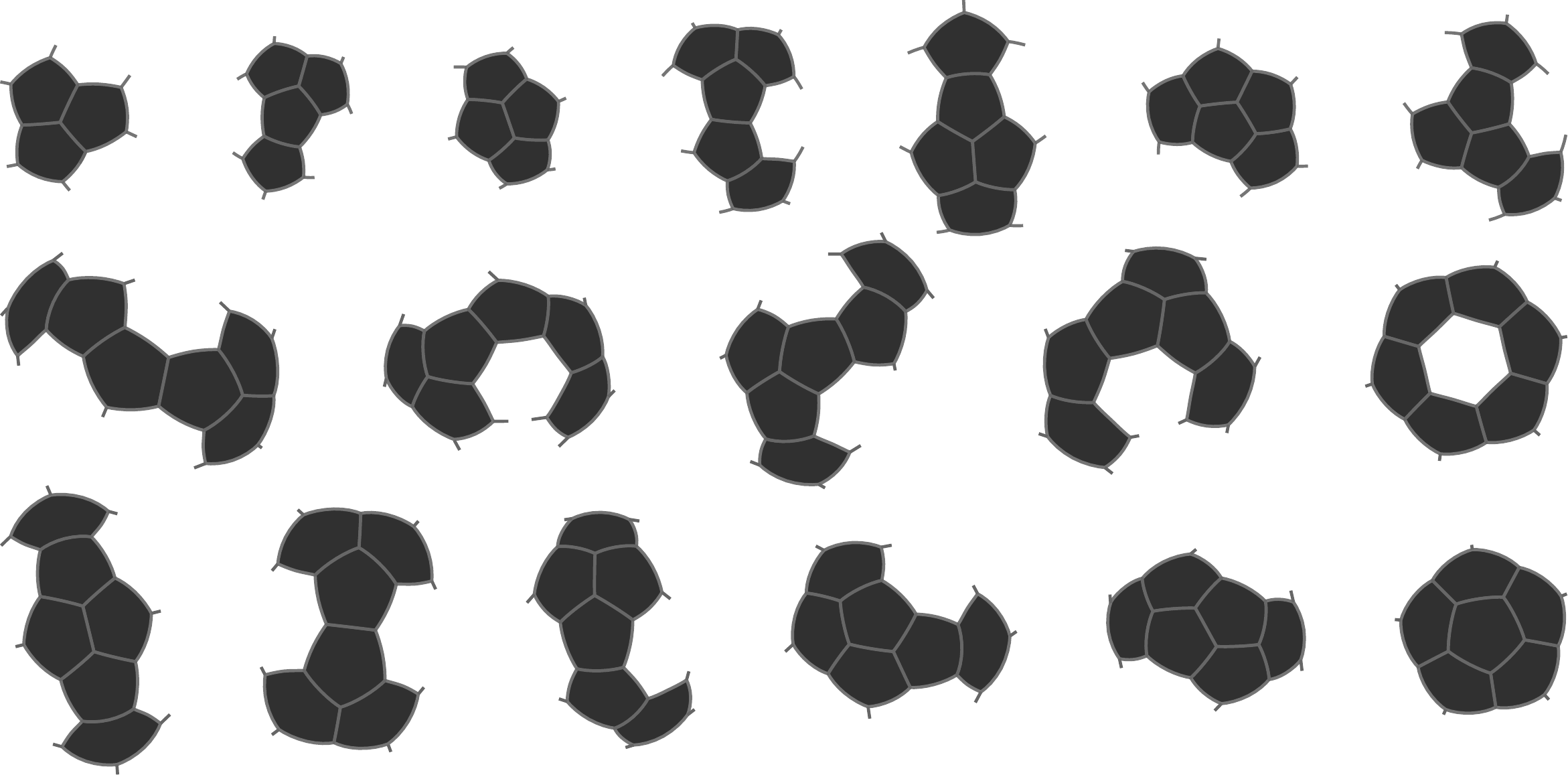}
\caption{
All the different blocks that can appear on our footballs.
}
\label{fig:blocks}
\end{figure}

Some of these patterns are difficult to distinguish, and naming them all is rather cumbersome.
For this reason, we simplified: the patterns in Fig.~\ref{fig:paths} are called {\em paths} and denoted $P_k$, those in Fig.~\ref{fig:blocks} are called {\em blocks} and denoted $B_k$ -- in both cases $k$ denotes the number of pentagons in the pattern.
We thus distinguish only 11 patterns: the paths $P_1,\ldots,P_7$ and the blocks $B_3,\ldots, B_6$.
We then call  {\em signature} of a football the formal sum of its patterns.
For example, the classic ball has signature $12P$, while the three balls in Fig.~\ref{fig:ballons_cirm} have, respectively, signatures $2B_6$, $3B_4$ and $4B_3$; the second roundest one, No.~2, has signature $8P+2P_2$, and the largest cluster of pentagons, $P_7$, appears only in No.~1742 and No.~1780.

Sadly enough, the 1812 balls yield only 133 different signatures, so that a signature usually does not uniquely determine a ball.
There are actually 15 balls characterized by their signature, but as many as 87 balls share the signature $2P+2P_2+2P_3$; Fig.~\ref{fig:signatures} provides a histogram.
Nevertheless, in practice, the signature is quite useful: it allows us to distinguish two random balls in $98.46\%$ of the cases.

Note that keeping track of all the 33 different patterns would not have changed that much: the balls would yield 335 different signatures, 118 of which determine a unique ball, but the above-mentioned 87 balls still share the same signature, and the probability of distinguishing two random balls only slightly improves to $99\%$.

\begin{figure}[hbt]
\centering
\includegraphics[width=\textwidth]{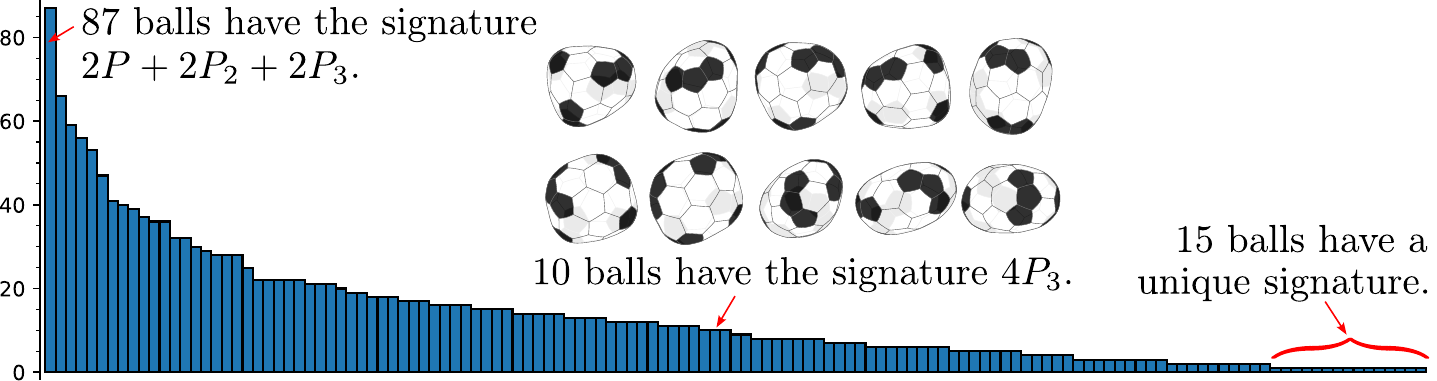}
\caption{
The number of balls corresponding to each of the 133 signatures.
}
\label{fig:signatures}
\end{figure}

\subsection{Symmetries}

Another reason why some balls catch the eye more than others is their symmetries.
A {\em symmetry} of an object is a transformation that leaves it invariant.
In the case of our footballs, these are the isometries: rotations and mirror images.
The set of symmetries of an object is called its {\em symmetry group}.
So, what are the symmetry groups of our 1812 balls?

The problem is that these symmetries are very sensitive to the embedding.
Move a vertex slightly here, stretch an edge a bit there, and your beautiful symmetry disappears.
Not to mention that the vertex coordinates are typically stored as floating-point numbers on the computer. 
One therefore needs some tolerance on the vertex coordinates.
Several programs to do that exist, though their results are not always consistent.
We checked our 1812 embeddings with two programs, PointGroup \cite{pointgroup} and PyMatGen \cite{pymatgen}: they agree on all but 40 footballs (and this is rather robust to modifying the tolerance of the programs).
We counter-checked these 40 cases visually, using the applet \cite{appli}.
Most of the time, one of the programs misses a symmetry; only Ball No.~3 causes trouble to both programs.
Results are given in Table~\ref{tab:sym}, using the Schönflies notation for three-dimensional symmetry groups (the most usual one in chemistry, see e.g. \cite{Vin01} for a detailed account).

\begin{table}
\centering
\setlength{\tabcolsep}{4pt}
\begin{tabular}{|ccccccccccccccc|}
\hline
$C_1$ & $C_2$ & $C_s$ & $D_2$ & $C_{2v}$ & $C_{2h}$ & $S_4$ & $D_3$ & $C_{3v}$ & $D_{2h}$ & $D_{2d}$ & $D_5$ & $D_{5d}$ & $D_{6h}$ & $I_h$\\
\hline
1508 & 189 & 67 & 19 & 9 & 4 & 2 & 3 & 1 & 1 & 4 & 1 & 1 & 2 & 1\\ 
\hline
\end{tabular}
\caption{Number of footballs for each symmetry group.}
\label{tab:sym}
\end{table}

Actually, we got exactly the same statistics as those given in \cite{BD97} for fullerene embeddings, and only 4 differences with those given in \cite{SHSG17}.
This is a bit surprising, because chemists are interested in modeling the structure of a molecule made of atoms and bonds while we are interested in modeling a football made of solid panels of leather.
Are footballs unwittingly doing molecular mechanics?

Let us mention that, whereas the isometries depend on the very embedding, the symmetries of the abstract graph itself are permutations of vertices that are non-ambiguous and can be determined automatically.
They form what is called the {\em automorphism group} of the graph.
However, the correspondence between the automorphism group of the graph and the symmetry group of an embedding of this graph is not straightforward.
A theorem of Mani \cite{Man71} ensures that every fullerene (actually, every planar 3-connected graph) admits an embedding whose symmetry group is isomorphic to its automorphism group.
But this embedding could have nothing to do with ours (in particular, faces can be stretched in Mani's theorem).
In our case, however, our embedding turned out to always suit (with a slight tolerance on symmetry).
This nevertheless raises the question:
\begin{question}
Does every fullerene define an Alexandrov polyhedron whose symmetry group is isomorphic to the fullerene automorphism group?
\end{question}
Another issue is that different symmetry groups can be isomorphic to the same automorphism group.
In our case, both $C_2$ and $C_s$ are isomorphic to the automorphism group $C_2$; both $D_3$ and $C_{3v}$ to $S_3$; and $C_{2v}$, $C_{2h}$ and $D_2$ to $C_2\times C_2$.
We thus have to really look at the embedding to determine which of these isomorphic symmetry groups is the right one.

As for the problem of characterizing each ball, symmetry helps rather little: only 75 balls are characterized by their signature and their symmetry group.
This is not surprising, since there are only 15 symmetry groups and nearly 83\% of the balls have no symmetry at all (group C1).
For example, among the 87 balls with signature $2P+2P_2+2P_3$, 61 have no symmetry.

\subsection{Further}

What other simple properties can one imagine to differentiate these balls?

We attempted to consider {\em H-run}, defined as a chain of ``aligned'' hexagons that one can traverse, leaving each one through the edge opposite to the one through which one entered it.
For example, the classic ball has only H-runs of length 2.
Some H-runs close on themselves into {\em H-loops}: Ball No.~1796 (left in Fig.~\ref{fig:ballons_cirm}) has three H-loops of length 6 (and 12 H-runs of length 3).
Listing all the H-runs of a ball held in hand is difficult, but finding the longest one is usually fairly simple.
The longest H-run has length 21 (Ball No.~1405); it self-intersects since there are only 20 hexagons in the ball.
The longest H-loop has length 14 (Ball No.~1579).
Combining the lengths of the longest H-run and H-loop with the signature and the symmetry group characterizes 393 balls.
This is still far from 1812.
For example, among the 61 balls with signature $2P+2P_2+2P_3$ and no symmetry, 15 also have the same maximum H-run and H-loop lengths.


One could also spot specific patterns (for example, a hexagon surrounded by 6 hexagons), or even use the valve to distinguish one hexagon.
A complete and ``reasonably simple'' characterization remains an open question.
The ideal would be an analogue of the guide to recognizing wallpaper groups \cite{Rad16}, which would allow one to quickly determine the number of a ball held in one's hands, of course without reading the stamp.
An algorithm to easily distinguish, e.g., Balls No.~358 from No~439 \cite{appli}.

\section{Supplementary images {\small (Photo Tatiana \c{S}enol)}}

\begin{center}
\includegraphics[width=\textwidth]{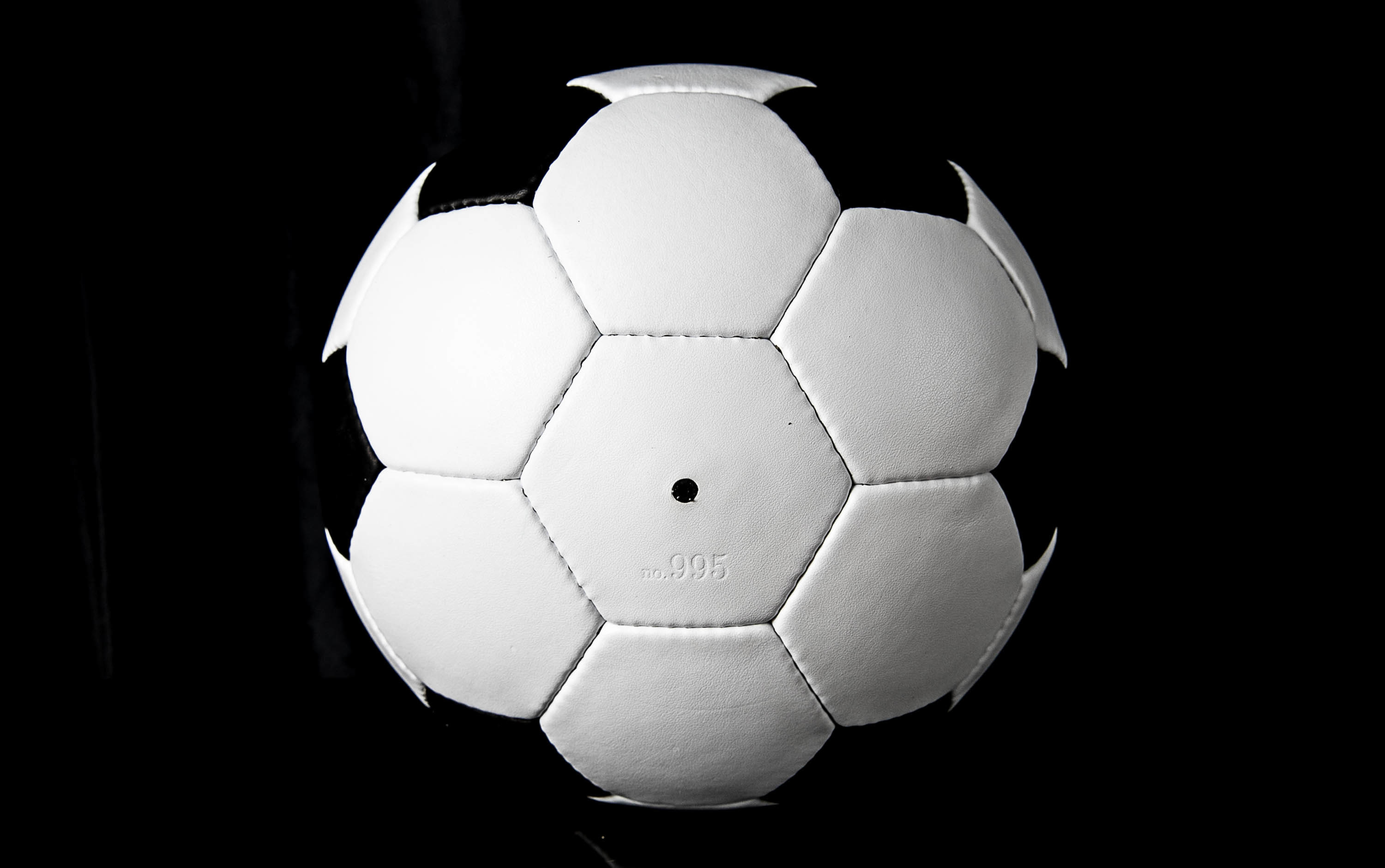}
\end{center}

\vfill

\begin{center}
\includegraphics[width=\textwidth]{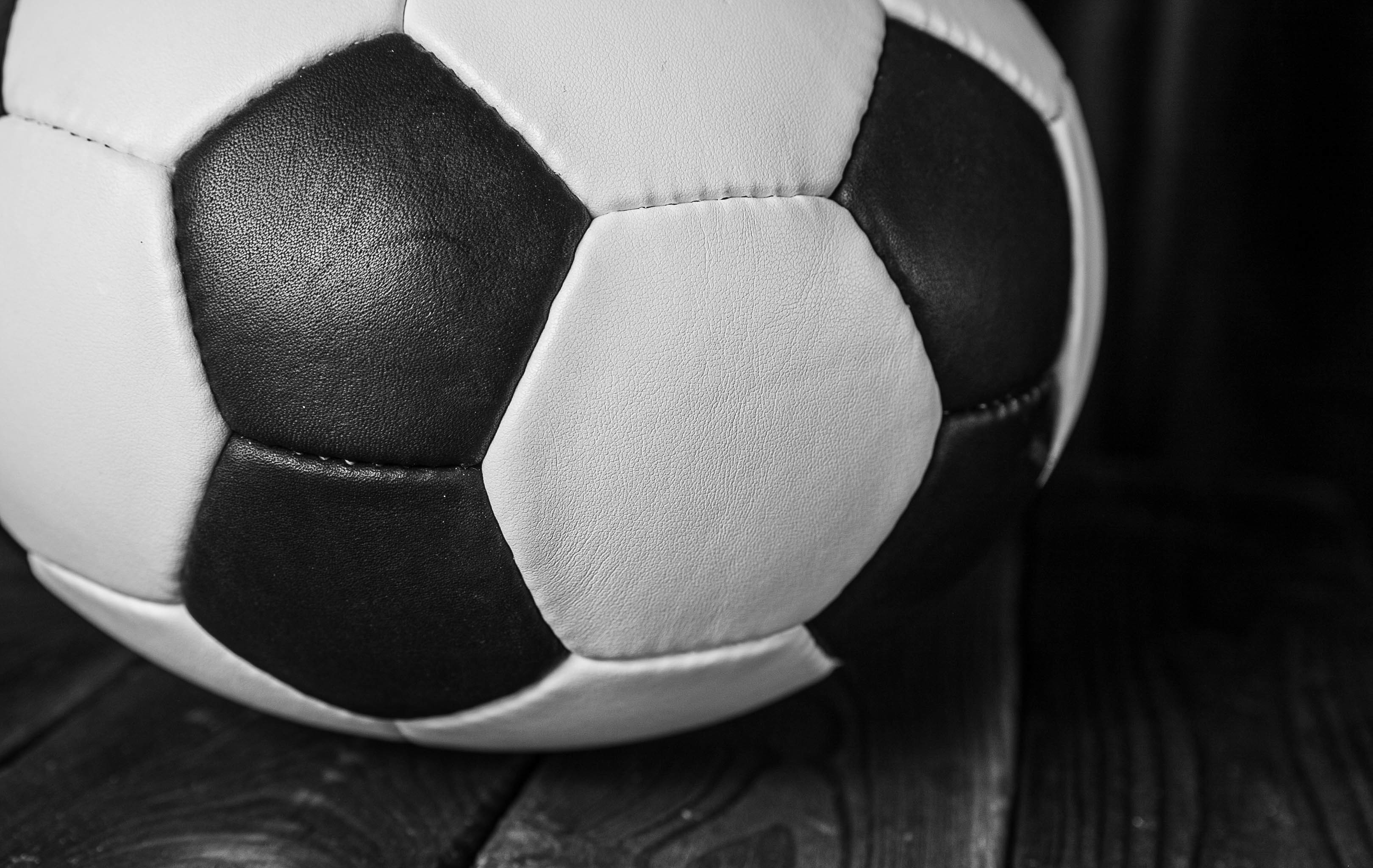}
\end{center}

\begin{center}
\includegraphics[width=\textwidth]{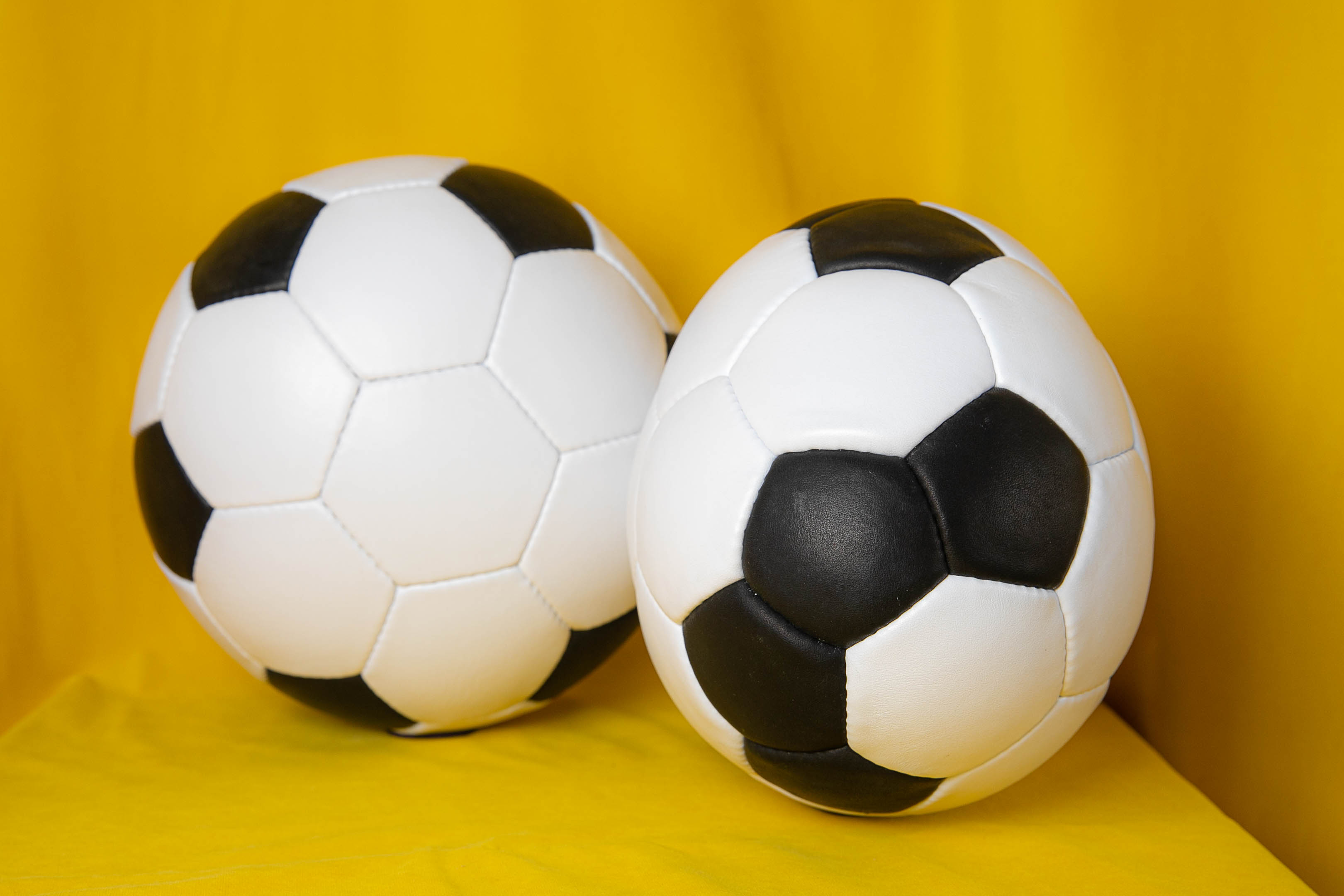}
\end{center}

\vfill

\begin{center}
\includegraphics[width=\textwidth]{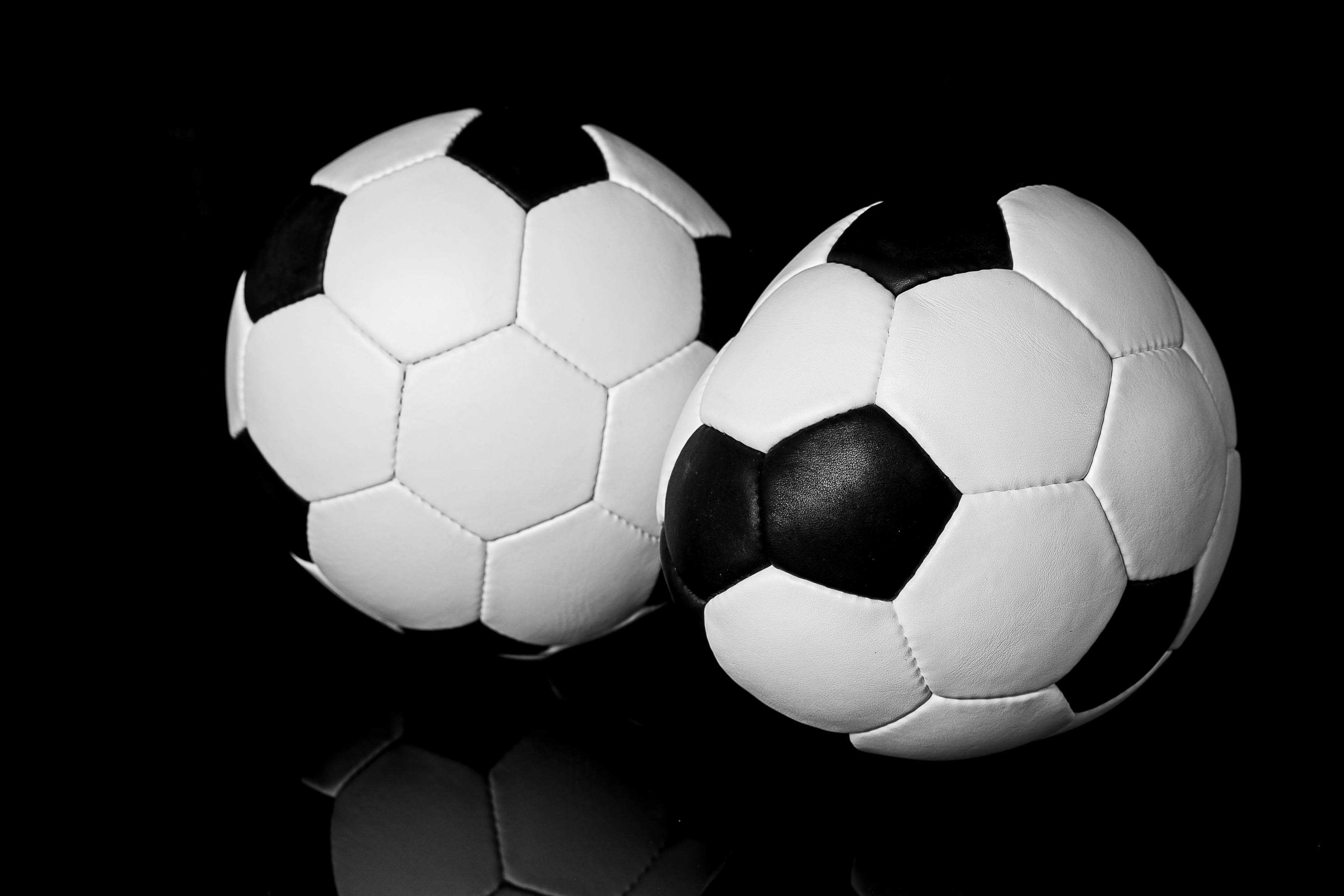}
\end{center}

\bibliographystyle{alpha}
\bibliography{telstar}

\end{document}